\def\R{\mathbb{R}}
\def\Z{\mathbb{Z}}
\def\M{\mathcal{M}}
\newcommand{\dsum}{\displaystyle\sum}
\def\x{\bold{x}}
\newtheorem{theorem}{Theorem}
\newtheorem{lemma}[theorem]{Lemma} 
\newtheorem{corollary}{Corollary}
\newtheorem{definition}{Definition}[section]
\newtheorem{example}{Example}[section] 
\newtheorem{remark}{Remark}[section] 
\let\origmaketitle\maketitle
\def\maketitle{
  \begingroup
  \def\uppercasenonmath##1{} 
  \let\MakeUppercase\relax 
  \origmaketitle
  \endgroup
}
\def\R{\mathbb{R}}
\def\Z{\mathbb{Z}}
\def\MM{\mathbb{M}}
\renewcommand{\x}{\ensuremath{{\bm{x}}\xspace}}
\newcommand{\s}{\ensuremath{{\bm{s}}\xspace}}
\newcommand{\y}{\ensuremath{{\bm{y}}\xspace}}
\newcommand{\z}{\ensuremath{{\bm{z}}\xspace}}
\newcommand{{\ww}}{\ensuremath{{\bm{w}}\xspace}}
\newcommand{\e}{\ensuremath{{\bm{e}}\xspace}}
\renewcommand{\d}{\ensuremath{{\bm{d}}\xspace}}
\renewcommand{\t}{\ensuremath{{\bm{t}}\xspace}}
\newcommand{{\vv}}{\bm{v}} 
\newcommand{\cc}{{\bm{c}}} 
\renewcommand{\c}{{\bm{c}}}
\newcommand{\uu}{\ensuremath{{\bm{u}}\xspace}}
\newcommand{\bb}{\ensuremath{{\bm{b}}\xspace}}
\renewcommand{\a}{\ensuremath{{\bm{a}}\xspace}}
\newcommand{\0}{\ensuremath{{\bm{0}}\xspace}}
\newcommand{\ba}{\ensuremath{\bm{\alpha}\xspace}}
\newcommand{\be}{\ensuremath{\bm{\beta}\xspace}}
\newcommand{\ga}{\ensuremath{\bm{\gamma}\xspace}}
\newcommand{\XI}{\ensuremath{\bm{\xi}\xspace}}
\newcommand{\Eta}{\ensuremath{\bm{\eta}\xspace}}
\newcommand\MinMG{\ensuremath{\mathrm{MinMG}\xspace}}
\newcommand\MaxMG{\ensuremath{\mathrm{MaxMG}\xspace}}
\newcommand{\A}{\mathcal{A}}
\newcommand\B{\ensuremath{\mathcal{B}\xspace}}
\newcommand\C{\ensuremath{\mathcal{C}\xspace}}
\renewcommand\M{\ensuremath{\mathcal{M}\xspace}}
\newcommand\V{\ensuremath{\mathcal{V}\xspace}}
\newcommand\GPC{\ensuremath{\mathrm{GPC}\xspace}}
\newcommand\SOC{\ensuremath{\mathrm{SOC}\xspace}}
\newcommand\SONC{\ensuremath{\mathrm{SONC}\xspace}}
\newcommand\SOS{\ensuremath{\mathrm{SOS}\xspace}}
\newcommand\conv{\ensuremath{\mathrm{Conv}\xspace}}
\newcommand\adj{\ensuremath{\mathrm{Adj}\xspace}}
\renewcommand\int{\ensuremath{\mathrm{int}\xspace}}
\newcommand{\dprod}{\displaystyle\prod}
\newcommand{\dmax}{\displaystyle\max}
\newcommand{\dmin}{\displaystyle\min}
\newtheorem{properties}{Properties}[section]%
\pgfplotsset{compat=newest}
\title[Optimal Mediated Graphs]{{\Large Optimal mediated graphs}\\\medskip
{\large The role of combinatorics in conic optimization}}
\author[V. Blanco \MakeLowercase{and} M. Mart\'inez-Ant\'in]{{\large V\'ictor Blanco$^{\dagger,\star}$  and Miguel Mart\'inez-Ant\'on$^\dagger$}\medskip\\
$^\dagger$Institute of Mathematics (IMAG), Universidad de Granada\\
$^\star$ Dpt. Quant. Methods for Economics \& Business, Universidad de Granada}
\begin{document}

\maketitle
\vspace{-2em}
\begin{center}
    {\small \texttt{vblanco@ugr.es}, \texttt{mmanton@ugr.es}}
\end{center}


\begin{abstract}
In this paper, we provide a unified definition of mediated graph, a combinatorial structure with multiple applications in mathematical optimization. We study some geometric and algebraic properties of this family of graphs and analyze extremal mediated graphs under the partial order induced by the cardinalty of their vertex sets. We derive mixed integer linear formulations to compute these challenging graphs and show that these structures are crucial in different fields, such as sum of squares decomposition of polynomials and second-order cone representations of convex cones, with a direct impact on conic optimization. We report the results of an extensive battery of experiments to show the validity of our approaches.
\end{abstract}

\smallskip
\noindent \textbf{Keywords.} Mediated Sets, Second Order Cones, Sums of Squares, Sums of Nonnegative Circuits, Conic Optimization.



\section*{Introduction}

Conic optimization plays a fundamental role in combinatorial optimization and graph theory, where conic structures have been used to formulate stronger relaxations~\citep{bie2006fast}, derive efficient algorithms for challenging combinatorial problems~\citep{alizadeh1995interior}, or detect graph properties or invariants, as those induced by semidefinite programming (SDP) for problems such as graph partitioning~\citep{orecchia2011towards}, the maximum stable set problem~\citep{gaar2022sdp}, and the max-cut problem~\citep{gaar2020computational,goemans1995improved}. Beyond SDPs, second-order cone programming (SOCP) and $p$-order cone formulations ($p$OCP) have also been leveraged to strengthen or find good quality bounds for clasical combinatorial optimization problems or to model robust versions of these problems~\citep[see, e.g.,][among many others]{burer2009p,muramatsu2003new,bental2001,buchheim2018robust,liu2022robust}. Conic duality has been instrumental in identifying hidden combinatorial structures, such as exploiting Laplacian eigenvalues in spectral graph theory to design more efficient clustering and community detection algorithms~\citep{qing2020dual}. 

Needless to say that conic optimization provides by itself a fundamental framework for addressing a broad class of convex optimization problems, and that is particularly useful, through convenient relaxations via the Moment-SOS hierarchy~\citep{lasserre2001global}, to optimize multivariate polynomials restricted on feasible regions defined by semialgebraic sets. At the core of this methodology lies the ability to certify polynomial nonnegativity using SOS decompositions, which translates nonnegativity conditions into semidefinite constraints. This approach is particularly powerful in several applications, as optimal control, probability, or dynamical systems where ensuring stability, safety, and optimality often relies on verifying polynomial inequalities~\citep{pauwels2017positivity,parrilo2003semidefinitea,parrilo2003semidefiniteb,henrion2020moment}. Within this framework, two fundamental problems arise: (1) the decomposition of nonnegative polynomials as sum of squares polynomial, since it implies a semidefinite programming reformulation of a polynomial optimization problem; and (2) the representation of general $p$-order conic problems as second-order conic problems that are efficiently handled by the off-the-shelf solvers.

On the other hand, graph theory is fundamental in understanding the structure and interrelationships within complex systems and has been proven to be a very powerful tool to analyze social or telecommunication networks~\citep[see, e.g.,][]{barnes1969graph,ramirez2020multilayer}. 
Nevertheless, graph theory holds fundamental importance beyond its practical applications, as its ability to translate abstract mathematical ideas into concrete visual forms  provides a rich framework for exploring deep mathematical concepts related to structure, symmetry, and connectivity, offering insights in different more theoretical disciplines. For instance, graph-theoretic approaches are used to explore knot theory~\citep{murasugi1989invariants}, the study of surfaces~\citep{boykov2003computing} or probabilistic methods~\citep{erdos1960evolution,gilbert1959random}.

In this paper, we provide a new link between these two worlds, conic optimization and graph theory, by introducing and analyzing a new family of geometric graphs, which despite its impact the computation of explicit sum of squares decomposition of nonnegative polynomials or SOCP representation of general $p$-order and power cone problems, has not been previously formally introduced, the so-called family of \emph{mediated graphs}.

We formally analyze the family of mediated graphs, that will be defined as geometric directed graphs embedded in abelian groups, where almost all its vertices (parents) are the midpoint of two other vertices (children) in the graph, and an arc links each parent with its children. The underlying structure of these graphs is closely related to the concept of mediated set that was introduced by \cite{reznick1989forms} as a tool to provide a necessary and sufficient condition for an \emph{agiform} (nonnegative homogeneous polynomial derived from ``arithmetic-geometric mean inequality'') to be decomposed as a sum of squares of polynomials. Since then, the notion of mediated set has been modified to be adapted to different problems (see, e.g., \cite{hartzer2022initial,magron2023sonc,wang2024weighted}). \cite{magron2023sonc} also exploit this structure to provide a second-order cone ($\SOC$) representation of sums of nonnegative circuits cones ($\SONC$). \cite{dressler2017positivstellensatz} and \cite{wang2022nonnegative} propose the use of mediated sets to represent convex semialgebraic sets using $\SONC$. Sum of squares ($\SOS$) decompositions were addressed by \cite{reznick1989forms, iliman2016amoebas, iliman2016lower} using also mediated sets. Recently, in \citep{wang2024weighted} these sets have been applied to provide optimal $\SOC$ representations of weighted geometric mean inequalities. The usefulness of the graph structure of a mediated set was already observed by \cite{blanco2024minimal} for the same problem, where the authors provide explicit minimal $\SOC$ extended representations of generalized power cones ($\GPC$). It allows to exactly and efficiently formulate a $\GPC$ Programming (GPCP) problem as a $\SOC$ Programming (SOCP) problem with a direct impact in solving optimization problems involving these cones. Another recent contributions on mediated sets can be found in~\citep{powers2021note}.

The remainder of this paper is organized as follows. In Section \ref{sec:prelim}, we introduce the notation used throughout the paper, formally define the concept of mediated graphs, and establish some fundamental geometric and algebraic properties derived from their definition. Section \ref{sec:optmg} is dedicated to studying optimal (extremal) graphs within the class of mediated graphs under a partial order that we define, based on the cardinality of the vertex set. The identification and computation of these optimal graphs form the core of this work. We motivate the need for their efficient computation by highlighting their connection to the existence of SOS decompositions of circuit polynomials and minimal SOC representations of more complex conic structures. In Sections \ref{sec:minimal} and \ref{sec:maximal}, we introduce and analyze minimal and maximal mediated graphs, respectively. These sections present the main contributions of this work: the development of integer linear optimization (ILO) formulations to compute these graphs. We further illustrate the application of these structures in three different type of problems of interest: representation of generalized power cones, SOS decomposition of sum of nonnegative circuits (SONC), and the analysis of the intersection of the SOS and SONC cones. In Section \ref{sec:exp}, we report the results of an extensive set of experiments demonstrating the validity and practical utility of our approaches. For maximal mediated graphs, we compare our optimization-based method with the enumerative approach proposed in \citep{hartzer2022initial}. For minimal mediated graphs, we analyze the performance of different domain formulations for which we derive integer linear optimization models. Finally, we draw some conclusions and indicate some potential future research on the topic.

\section{Mediated Graphs}\label{sec:prelim}

In this section, we introduce the notion of mediated graph and derive some interesting geometric and algebraic properties. 

\subsection{Notation}

Let $G:=(V,A)$ be a directed graph, where $V$ is the vertex set and $A\subset V\times V$ is the arc set of $G$. For each ${\vv} \in V$, we denote by $\delta^+({\vv}):=\{{\ww} \in V: ({\vv},{\ww}) \in A\}$
the set of its outgoing arcs, and $\deg^+({\vv}):=|\delta^+({\vv})|$ is the outdegree (here, $|X|$ denotes the cardinal of the finite set $X$). Analogously, $\delta^-({\vv}):=\{{\ww} \in V: ({\ww},{\vv}) \in A\}$ denotes the set of ingoing arcs of ${\vv}$, and $\deg^-({\vv}):=|\delta^-({\vv})|$ its indegree. Any directed graph, $G=(V,A)$, has associated three matrices that allow to characterize the graph, namely the adjacency, the degree, and the laplacian matrix. The \emph{adjacency matrix} of $G$ is defined as $\adj(G):=(a_{{\vv} {\ww}})_{{\vv},{\ww}\in V},$ where $a_{{\vv}{\ww}}=1$ if $({\vv},{\ww})\in A$, and zero otherwise.  The \emph{degree matrix} of $G$ is the diagonal matrix of order $|V|\times |V|$ defined as $D^+(G):=(d_{{\vv} {\ww}})_{{\vv},{\ww}\in V},$ where $d_{{\vv} {\vv}}=\deg^+({\vv})$. Finally The \emph{laplacian matrix} of $G$ is defined as $L^+(G):=D^+(G)-\adj(G).$ Given a subset $U\subset V$, a matrix $M_U$ corresponds with the submatrix of $M$ formed by the rows and columns indexed in $U$.

Let $\R^d$ be the $d$-dimensional real vector space, we denote the vectors ${\vv}=(v_1,\ldots, v_d)\in \R^d$ by bold characters. Given a subset $\MM \subseteq \R^d$, we say that $G=(V,A)$ is a $\MM$-geometric digraph if its vertex set, $V$, is embedded in $\MM$, i.e., $V \subset \MM$. For a finite set $\A\subset \MM$, we denote by $\conv(\A)$ the convex hull of $\A$, and by $\conv(\A)^\circ$ its interior.  

We start by introducing the main definition in this work.
\begin{definition}[Mediated Graph]\label{def:mg}
    Let $\A \subset \MM$ be a finite set. A $\MM$-geometric digraph $G=(V,A)$ is said a {\bf $\A$-mediated graph} if $\A\subset V$, $\delta^+({\vv}) \neq \emptyset$, and if ${\ww} \in \delta^+({\vv})$, then $2{\vv}-{\ww} \in \delta^+({\vv})$, for all ${\vv}\in V\backslash \A$.
\end{definition}

The family of $\A$-mediated graphs on $\MM$ will be denoted as $\M_\A^{\MM}$. The mention of the domain $\MM$ will be omitted in the notation, unless it is necessary to avoid confusion.

The definition of mediated graph implies that every vertex in $V$, except those in $\A$, is the midpoint of two of its out-adjacent vertices.

\begin{remark}\label{rem:1}
    Note that every nonempty mediated graph $G=(V,A)$ can be reduced to a graph $G'=(V,A')$, whose outdegree $\deg^+({\vv})=2$ for all ${\vv}\in V\backslash \A$ and $\deg^+(\a)=0$ for all $\a \in \A$ which is also a mediated graph. Thus, hereinafter, we assume without loss of generality, that, in every $\A$-mediated graph, each vertex has outdegree $2$, except for those in $\A$ which have outdegree $0$. Moreover, we assume $\A\neq \emptyset$, since the only $\emptyset$-mediated graph is the empty graph.
\end{remark}

In the following example, we illustrate the geometrical shapes of some mediated graphs.
\begin{example}
Let $\A=\{(0,0),(7,0),(0,7)\}$ (red dots) in Figure \ref{fig:ejemplomg} we show two different $\A$-mediated graphs in two different domains.  In the left picture we show $\A$-mediated graph constructed on $\MM=\R^2$. Note that, apart from the points in $\A$, four more vertices appear in the graph (blue dots), namely $(0.5,0.5), (1,1), (2,2)$, and $(3.5,3.5)$, all of them midpoints of two other vertices in the graph. The arrows indicate the arcs in the graph, each of them directed from the vertex to the two other vertices for which it is a midpoint. In the right picture we show an example of $\A$-mediated graph with domain $\MM=\Z^2$. In this case, the graph consists of $10$ vertices ($3$ of them those in $\A$), but the coordinates of the points are now integer numbers. In this case, the vertices not in $\A$ are: 
  $(1, 0)$, $(1, 1)$, $(1, 2)$, $(2, 0)$, $(2, 4)$, $(4, 0)$, and $(4, 1)$.

\begin{figure}[h!]
\begin{center}
\adjustbox{width=0.45\textwidth}{\begin{tikzpicture}[scale=0.35]
\draw [step=1.0, gray, very thin] (0,0) grid (7.0,7.0);
\node (A1) at (0.0,7.0) [circle, draw, red, inner sep=0.8, fill=red] {};
\node (A2) at (0.5,0.5) [circle, draw, blue, inner sep=0.8, fill=blue] {};
\node (A3) at (0.0,0.0) [circle, draw, red, inner sep=0.8, fill=red] {};
\node (A4) at (1.0,1.0) [circle, draw, blue, inner sep=0.8, fill=blue] {};
\node (A5) at (7.0,0.0) [circle, draw, red, inner sep=0.8, fill=red] {};
\node (A6) at (3.5,3.5) [circle, draw, blue, inner sep=0.8, fill=blue] {};
\node (A7) at (2.0,2.0) [circle, draw, blue, inner sep=0.8, fill=blue] {};

\draw[-{Stealth[scale=0.4]}, line width=0.2] (A7)--(A6);
\draw[-{Stealth[scale=0.4]}, line width=0.2] (A6)--(A1);
\draw[-{Stealth[scale=0.4]}, line width=0.2] (A7)--(A2);
\draw[-{Stealth[scale=0.4]}, line width=0.2] (A2)--(A3);
\draw[-{Stealth[scale=0.4]}, line width=0.2] (A4)--(A7);
\draw[-{Stealth[scale=0.4]}, line width=0.2] (A6)--(A5);
\draw[-{Stealth[scale=0.4]}, line width=0.2] (A2)--(A4);
\draw[-{Stealth[scale=0.4]}, line width=0.2] (A4)--(A3);
\end{tikzpicture}}~\hspace*{0.3cm}\adjustbox{width=0.45\textwidth}{\begin{tikzpicture}[scale=0.35]
\draw [step=1.0, gray, very thin] (0,0) grid (7,7);
\node (A1) at (0,7) [circle, draw, red, inner sep=0.8, fill=red] {};
\node (A2) at (2,4) [circle, draw, blue, inner sep=0.8, fill=blue] {};
\node (A3) at (4,0) [circle, draw, blue, inner sep=0.8, fill=blue] {};
\node (A4) at (1,2) [circle, draw, blue, inner sep=0.8, fill=blue] {};
\node (A5) at (0,0) [circle, draw, red, inner sep=0.8, fill=red] {};
\node (A6) at (1,1) [circle, draw, blue, inner sep=0.8, fill=blue] {};
\node (A7) at (7,0) [circle, draw, red, inner sep=0.8, fill=red] {};
\node (A8) at (2,0) [circle, draw, blue, inner sep=0.8, fill=blue] {};
\node (A9) at (1,0) [circle, draw, blue, inner sep=0.8, fill=blue] {};
\node (A10) at (4,1) [circle, draw, blue, inner sep=0.8, fill=blue] {};
\draw[-{Stealth[scale=0.4]}, line width=0.2] (A4)--(A5);
\draw[-{Stealth[scale=0.4]}, line width=0.2] (A10)--(A7);
\draw[-{Stealth[scale=0.4]}, line width=0.2] (A9)--(A8);
\draw[-{Stealth[scale=0.4]}, line width=0.2] (A8)--(A5);
\draw[-{Stealth[scale=0.4]}, line width=0.2] (A6)--(A9);
\draw[-{Stealth[scale=0.4]}, line width=0.2] (A6)--(A4);
\draw[-{Stealth[scale=0.4]}, line width=0.2] (A2)--(A1);
\draw[-{Stealth[scale=0.4]}, line width=0.2] (A3)--(A9);
\draw[-{Stealth[scale=0.4]}, line width=0.2] (A8)--(A3);
\draw[-{Stealth[scale=0.4]}, line width=0.2] (A9)--(A5);
\draw[-{Stealth[scale=0.4]}, line width=0.2] (A3)--(A7);
\draw[-{Stealth[scale=0.4]}, line width=0.2] (A2)--(A10);
\draw[-{Stealth[scale=0.4]}, line width=0.2] (A4)--(A2);
\draw[-{Stealth[scale=0.4]}, line width=0.2] (A10)--(A4);
\end{tikzpicture}}
    \caption{Two versions of $\A$-mediated graphs but with different vertex domains. $\R^2$ (left) and $\Z^2$ (right).\label{fig:ejemplomg}}
    \end{center}
\end{figure}
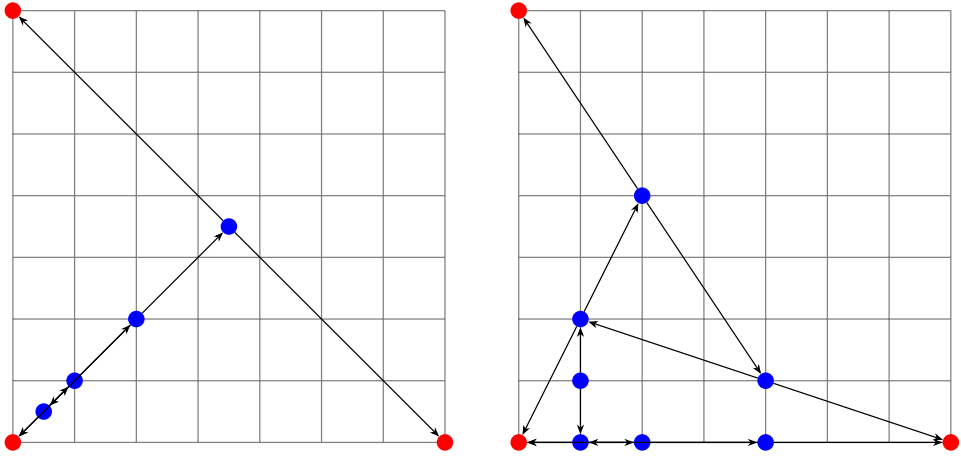

\end{example}

Although mediated graphs have been proven to be useful in different fields, in this paper we provide a general framework for them, as well as explore some of their properties. 
\subsection{Properties}

First, we derive some geometric and algebraic properties of a mediated graph, whose proof is straightforward, but allows us to understand the closeness properties of the family $\M_\A$.

\begin{properties}\label{prop:prop}
Let $\A\subset\MM$ be a finite set of points in $\MM$, and $G=(V,A) \in \M_\A$. Then, the following properties are verified:

\begin{enumerate}
    \item $G \in \M_A^{\MM'}$ for any $\MM \subseteq \MM'$.\label{prop:1} 
    \item $G \in \M_{\A'}$ for all $\A\subseteq \A'\subset \MM$ (without considering the isolated vertices in $\A'\backslash\A$).\label{prop:2}
    \item $\M_\emptyset = \{(\emptyset,\emptyset)\}$.\label{prop:4}
    \item If $\A'\subset \MM$ and $(V',A') \in \M_{\A'}$, then $(V\cup V', A\cup A')\in \M_{\A\cup\A'}$.\label{prop:union}
     \item $V \subset \conv(\A)$.\label{prop:chull}
     \item If $\A$ is affinely independent and $V\cap \conv(\A)^\circ\neq \emptyset$, then $\deg^{-}(\a)\geq 1$ for all $\a\in\A$.\label{prop:last}
     \item The submatrix $L^+(G)_{V\backslash \A}$ of the laplacian matrix of $G$ is invertible, and the entries of its inverse are nonnegative.\label{prop:laplacian}
\end{enumerate}
\end{properties}

\begin{proof}
 Since \ref{prop:1}-\ref{prop:last} are straightforward, we will only prove \ref{prop:laplacian}. It is just needed to see if the matrix satisfies the hypothesis of ~\cite[Lemma 4.3]{reznick1989forms}. Since, for all ${\vv}\in V\backslash \A$, $\deg^+({\vv})=2$, thus the entries of the main diagonal of the matrix are equal to two; and it is clear that the entries out of the main diagonal only take values in $\{0,-1\}$. Each row is in the shape $(c_{{\vv}{\ww}})_{{\ww}\in V\backslash \A}$ with ${\vv}\in V\backslash \A$, therefore it has as $-1$'s as children of ${\vv}$ in $V\backslash \A$ so at most two. Eventually, assume that if there exists a principal submatrix in which each row has exactly two $-1$'s. This implies that there exists $U\subseteq V\backslash \A$ such that for all ${\vv}\in U$ its two children belong to $U$. So, the graph $(U, A\cap (U\times U))$ is an $\emptyset$-mediated graph, then $U=\emptyset$ by statement \ref{prop:4}.
\end{proof}

In the family of mediated graphs $\M_\A$, we will analyze those graphs whose vertex set contains a finite set $\B \subset \MM$. These subsets play a main role in this work.

\begin{equation}\label{eq:subsets}
\M_\A(\B) := \left\{(V,A) \in \M_\A: \B \subset V\right\}.
\end{equation}

When $\B$ be a singleton, i.e. $\B =\{\bb\}$ for some $\bb \in \MM$, $\M_\A(\bb) := \M_\A(\{\bb\})$. 

In what follows we analyze under which conditions the family $\M_\A(\B)$ is not empty.

\begin{definition}[Binary Support]
    Let $n$ be a nonnegative integer number. We denote by $\Omega(n)$ the set of strictly positive coefficients in the binary decomposition of $n$, namely, if $n = \sum_{i=0}^{\lfloor \log_2(n)\rfloor} b_i 2^i$, then
$$
\Omega(n) = \left\{i \in \{0, \ldots, \lfloor \log_2(n)\rfloor\}: b_i=1\right\}.
$$
\end{definition}

We adapt \cite[Corollary 28]{wang2024weighted} for mediated graphs to provide a first nonemptyness sufficient condition for $\M_\A(\bb)$.
\begin{lemma}\label{lemma:wang}
    Let $\A \subset \R^d$ be an affine independent set and $\bb\in \conv(\A)^\circ$. Then, there exists nonempty graph $G=(V,A)$ in $\M_{\A}(\bb)$ with
$$
|V|=\dsum_{\a\in \A}|\Omega(s_\a)|+|\A|-1.
$$
where $\s=(s_\a)_{\a\in\A}\in (\Z_+^*)^{|\A|}$, $\gcd(\s)=1$, and $\frac{1}{\|\s\|_1}\s$ are the barycentric coordinates of $\bb$ with respect to $\A$.
\end{lemma}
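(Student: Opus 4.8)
The plan is to build the graph $G$ explicitly by superimposing, over each vertex $\a \in \A$, a ``doubling chain'' that realizes $\bb$ as an appropriate midpoint-combination of the $\a$'s, and then invoke Property~\ref{prop:union} to glue these pieces into a single $\A$-mediated graph. First I would fix the barycentric coordinates: write $\bb = \frac{1}{\|\s\|_1}\sum_{\a\in\A} s_\a \a$ with $\s \in (\Z_+^*)^{|\A|}$, $\gcd(\s)=1$, and set $N := \|\s\|_1 = \sum_\a s_\a$. Since $\bb \in \conv(\A)^\circ$ and $\A$ is affinely independent, all $s_\a$ are strictly positive, and by rescaling I may (as in \cite{wang2024weighted}) arrange that $N$ is a power of two or, more robustly, work directly with the binary expansions of the $s_\a$. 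The core combinatorial idea is the classical ``binary addition chain'': starting from the vertices $\a$, repeatedly form midpoints of already-constructed vertices with $\bb$ or with each other so that after $\lceil \log_2 N\rceil$ doublings one reaches a vertex that is a dyadic rational combination of the $\a$'s with numerator vector $\s$ and denominator $N$; when $N=2^k$ this vertex is exactly $\bb$.

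The key steps, in order, are: (1) reduce to the case $N = 2^k$ for some $k$ by the standard trick of appending one extra ``slack'' coordinate (this is precisely where $\Omega(s_\a)$ enters — the number of extra vertices needed to represent $s_\a$ via its binary digits is $|\Omega(s_\a)|$, and one subtracts $1$ because the top shared vertex $\bb$ is counted once rather than $|\A|$ times, accounting for the $+|\A|-1$ term); (2) for each $\a$, construct a directed path $\a = w_0^{(\a)} \to w_1^{(\a)} \to \cdots$ of midpoints so that the terminal vertices, suitably averaged according to the binary digits $b_i$ of $s_\a$, sum to $\bb$; (3) organize these paths into a binary-tree-like digraph rooted at $\bb$, where each internal vertex $\vv \neq \bb, \vv \notin \A$ has exactly its two children $\ww, 2\vv - \ww$ present, so Definition~\ref{def:mg} is satisfied at $\vv$; (4) verify that $\bb$ itself has outdegree $2$ (it is the midpoint of the two vertices one level below it in the tree), so $\bb \in V \setminus \A$ is a legitimate mediated-graph vertex; (5) count the vertices, checking that the construction uses exactly $\sum_{\a\in\A}|\Omega(s_\a)| + |\A| - 1$ of them after removing duplicates, and appeal to Remark~\ref{rem:1} and Property~\ref{prop:union} to conclude $G \in \M_\A(\bb)$.

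I expect the main obstacle to be the bookkeeping in steps (2)–(3): making the binary addition chain for the vector $\s$ precise as a \emph{single} digraph in which every non-$\A$ vertex — including shared intermediate vertices reused across different $\a$'s — has both of its midpoint-children present, and simultaneously getting the vertex count exactly right rather than merely up to a constant. The subtlety is that when two doubling chains for different $\a$'s would create the same point, one must decide whether this coincidence helps or hurts the count; here the formula is an existence statement (``there exists $G$ with $|V| = \ldots$''), so it suffices to exhibit one valid construction achieving that number, but one still has to argue no \emph{forced} extra vertices sneak in. The cleanest route is probably to mimic the proof of \cite[Corollary 28]{wang2024weighted} verbatim, translating each ``mediated set element'' there into a vertex of our digraph and each midpoint relation into the pair of arcs $(\vv,\ww), (\vv, 2\vv-\ww)$ required by Definition~\ref{def:mg}, then checking that the set inclusions $\A \subset V$ and $\bb \in V$ and the outdegree conditions hold. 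Everything else — affine independence ensuring the $s_\a$ are positive and the barycentric representation is unique up to scaling, and $\bb \in \conv(\A)^\circ$ ensuring $\bb \neq \a$ for all $\a$ so that $\bb$ really is a non-$\A$ vertex — is routine.
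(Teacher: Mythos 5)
The first thing to note is that the paper does not actually prove this lemma: it is imported wholesale as an adaptation of \cite[Corollary 28]{wang2024weighted}, so your closing suggestion to ``mimic Wang's proof verbatim'' is, in effect, exactly what the paper itself does, and there is no in-paper argument against which to check your doubling-chain construction. Judged on its own terms, however, your sketch has a genuine gap precisely where the lemma has content. First, the reduction to $N=2^k$ by ``appending one extra slack coordinate'' is not available here: adding a slack point enlarges $\A$, and a graph that is mediated for the enlarged set need not be $\A$-mediated for the original one (Properties \ref{prop:prop}.\ref{prop:2} only goes in the opposite direction). Second, the exact vertex count is asserted rather than derived, and your accounting for the $+|\A|-1$ term (``the top vertex $\bb$ is counted once rather than $|\A|$ times'') does not withstand a small test. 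Take $\A=\{(3,0),(0,3)\}$ and $\bb=(1,2)$, so $\s=(1,2)$ and the stated formula predicts $|V|=|\Omega(1)|+|\Omega(2)|+2-1=3$. The only three-vertex candidate is $V=\A\cup\{\bb\}$, and then $\bb$ has no admissible pair of children, since $2\bb-(3,0)=(-1,4)$ and $2\bb-(0,3)=(2,1)$ both lie outside $V$; the smallest mediated graph through $\bb$ is $\{(3,0),(0,3),(1,2),(2,1)\}$, with four vertices. So either the bookkeeping you defer in steps (2)--(5) must be done far more carefully than the outline suggests, or the formula as transcribed in the lemma needs correction; in either case the count cannot be waved through, and it is exactly the count that the lemma claims.

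The remaining ingredients of your plan are unobjectionable and match what the paper implicitly relies on: positivity of the $s_\a$ follows from $\bb\in\conv(\A)^\circ$, uniqueness of the barycentric coordinates from affine independence, gluing of per-vertex chains via Properties \ref{prop:prop}.\ref{prop:union}, and normalization of outdegrees via Remark \ref{rem:1}. The missing piece is a precise description of which intermediate dyadic points are created, which of them are shared between the chains for different $\a\in\A$, and why the final tally is the stated one.
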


In the following result, we generalize the previous result for any set $\A$ and any set $\B$ (not necessarily singleton). Notice that for every point $\bb\in\conv(\A)$, there is at least one subset $\C\subseteq \A$ affinely independent (a.i.), strictly containing the point $\bb$. Hence, we can define 
\begin{multline}
    \A(\bb):={\rm arg} \dmin_{\C\subseteq \A \text{ a.i.}}\Bigg\{\dsum_{\cc\in \C}|\Omega(s_\cc)| + |\C|: \\\s=(s_\cc)_{\cc\in \C}\in (\Z_{
    >0})^{|\C|}, \; \gcd(\s)=1, \; \bb=\frac{1}{\|\s\|_1}\dsum_{\cc\in \C}s_\cc \cc \Bigg\}.
\end{multline}

We denote by $\s^\bb:=(s_\a^\bb)_{\a\in \A(\bb)}\in (\Z_+^*)^{|\A(\bb)|}$, where $\gcd(\s^\bb)=1$ and $\frac{1}{\|\s^\bb\|_1}\s^\bb$ are the barycentric coordinates of $\bb$ with respect to $\A(\bb)$. 

\begin{theorem}\label{theo:ub}
Let $\A,\B\subset \R^d$ be finite subsets and $\B\subset \conv(\A)$. Then, there exists a nonempty graph $G = (V,A) \in \M_\A(\B)$ such that:
\begin{equation}\label{eq:ub}
|V| \leq \nu_{\A}(\B):=\dsum_{\bb \in \B}\left[ \dsum_{\a\in\A(\bb)}|\Omega(s_\a^\bb)|+ |\A(\bb)|\right]-|\B|.
\end{equation}
\end{theorem}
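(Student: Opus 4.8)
The plan is to build the desired graph $G=(V,A)$ for $\M_\A(\B)$ by taking a union of graphs constructed separately for each point $\bb\in\B$, invoking the closedness of $\M_\A$ under unions (Properties~\ref{prop:prop}\eqref{prop:union}) to glue them together. First I would fix an arbitrary $\bb\in\B$. By definition of $\A(\bb)$, there is an affinely independent subset $\C=\A(\bb)\subseteq\A$ with $\bb\in\conv(\C)$ realizing the minimum, together with the weight vector $\s^\bb=(s_\a^\bb)_{\a\in\A(\bb)}$ of strictly positive integers with $\gcd(\s^\bb)=1$ such that $\frac{1}{\|\s^\bb\|_1}\s^\bb$ are the barycentric coordinates of $\bb$ with respect to $\A(\bb)$. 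Since $\A(\bb)$ is affinely independent, $\bb$ lies in the relative interior $\conv(\A(\bb))^\circ$ (all barycentric coordinates are strictly positive because all $s_\a^\bb>0$). This is exactly the hypothesis of Lemma~\ref{lemma:wang} (applied within the affine span of $\A(\bb)$, so the affine-independence and interior conditions are met), which yields a nonempty graph $G_\bb=(V_\bb,A_\bb)\in\M_{\A(\bb)}(\bb)$ with
\[
|V_\bb|=\dsum_{\a\in\A(\bb)}|\Omega(s_\a^\bb)|+|\A(\bb)|-1.
\]

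Next I would promote each $G_\bb$ to a graph in $\M_\A(\bb)$. By Properties~\ref{prop:prop}\eqref{prop:2}, since $\A(\bb)\subseteq\A$, the graph $G_\bb$ is also an $\A$-mediated graph once we regard the points of $\A\setminus\A(\bb)$ as isolated vertices (they have outdegree zero, so the mediated condition is vacuous at them); and it still contains $\bb$ in its vertex set, so $G_\bb\in\M_\A(\bb)$. Then I would set
\[
G=(V,A):=\Bigl(\bigcup_{\bb\in\B}V_\bb,\ \bigcup_{\bb\in\B}A_\bb\Bigr),
\]
where implicitly each $V_\bb$ already contains $\A$. Iterating Properties~\ref{prop:prop}\eqref{prop:union} over the finitely many points of $\B$ shows $G\in\M_{\A}$, and since each $V_\bb\supseteq\{\bb\}$ we have $\B\subseteq V$, hence $G\in\M_\A(\B)$. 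The graph is nonempty because $\B\neq\emptyset$ (we may assume this; otherwise $\nu_\A(\B)=0$ and the empty graph works).

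Finally, the cardinality bound follows from subadditivity of cardinality under unions: $|V|=\bigl|\bigcup_{\bb\in\B}V_\bb\bigr|\le\dsum_{\bb\in\B}|V_\bb\setminus\A|+|\A|$. Using $|V_\bb\setminus\A|=|V_\bb|-|\A|=\dsum_{\a\in\A(\bb)}|\Omega(s_\a^\bb)|+|\A(\bb)|-1-|\A|$... wait, that is not quite how the claimed right-hand side is organized; the cleaner route is to bound $|V|\le\dsum_{\bb\in\B}|V_\bb|$ minus the overcounting. More precisely, each $V_\bb$ contains $\A$, so in the union the set $\A$ is counted once while it appears in all $|\B|$ of the $V_\bb$; writing $|V|\le\dsum_{\bb\in\B}\bigl(|V_\bb|-|\A|\bigr)+|\A|$ and then using $|\A(\bb)|\le|\A|$ to absorb the discrepancy, one arrives at
\[
|V|\ \le\ \dsum_{\bb\in\B}\Bigl[\dsum_{\a\in\A(\bb)}|\Omega(s_\a^\bb)|+|\A(\bb)|\Bigr]-|\B|\ =\ \nu_\A(\B),
\]
since $\dsum_{\bb\in\B}\bigl(|V_\bb|-|\A|\bigr)+|\A| = \dsum_{\bb\in\B}\bigl[\dsum_{\a\in\A(\bb)}|\Omega(s_\a^\bb)|+|\A(\bb)|-1-|\A|\bigr]+|\A| \le \dsum_{\bb\in\B}\bigl[\dsum_{\a\in\A(\bb)}|\Omega(s_\a^\bb)|+|\A(\bb)|-1\bigr] - (|\B|-1)|\A|$, and the $|\A|$ terms only help. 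The main obstacle is purely bookkeeping: making the overcounting argument for $|\A|$ (and the slack coming from $|\A(\bb)|\le|\A|$) precise so that the $-|\B|$ on the right-hand side comes out exactly, rather than something weaker; there is no real geometric difficulty once Lemma~\ref{lemma:wang} and Properties~\ref{prop:prop}\eqref{prop:2},\eqref{prop:union} are in hand. A secondary point to check carefully is that Lemma~\ref{lemma:wang} is genuinely applicable to $\A(\bb)$ living in a lower-dimensional affine subspace of $\R^d$ — this is fine because mediated-graph and barycentric-coordinate notions are affine-invariant, so one may work inside $\operatorname{aff}(\A(\bb))$.
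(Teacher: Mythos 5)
Your proposal follows essentially the same route as the paper: apply Lemma~\ref{lemma:wang} to each $\bb\in\B$ with the affinely independent set $\A(\bb)$, promote each resulting graph to $\M_\A(\bb)$ via Properties~\ref{prop:prop}.\ref{prop:2}, glue with Properties~\ref{prop:prop}.\ref{prop:union}, and bound $|V|$ by subadditivity. The only remark is that your final bookkeeping is needlessly involved: since $|V_{\A(\bb)}|=\sum_{\a\in\A(\bb)}|\Omega(s_\a^\bb)|+|\A(\bb)|-1$, the plain inequality $|V|\le\sum_{\bb\in\B}|V_{\A(\bb)}|$ already equals $\nu_\A(\B)$ exactly (the $-|\B|$ is just the sum of the $-1$'s), so no overcounting argument for $\A$ is needed.
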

\begin{proof}
By Lemma \ref{lemma:wang}, there exists $G_{\A(\bb)}=(V_{\A(\bb)},A_{\A(\bb)})$ in $\M_{\A(\bb)}(\bb)\subseteq \M_\A(\bb)$ (the inclusion is obtained from Properties \ref{prop:prop}.\ref{prop:2}) with
$$
|V_{\A(\bb)}|=\dsum_{\a\in \A(\bb)}|\Omega(s_\a^\bb)|+|\A(\bb)|-1.
$$
By Properties \ref{prop:prop}.\ref{prop:union}, there must be a graph $G=(V, A)\in \M_\A(\B)$ where $V=\displaystyle\bigcup_{\bb\in \B}V_{\A(\bb)}$, thus $|V|\leq \dsum_{\bb\in \B} |V_{\A(\bb)}|$.
\end{proof}

Note that the nonemptyness of $\M_\A(\B)$ is not assured in general, even if $\B\subset \conv(\A)\cap \MM$. In the following result we show a counterexample of this situation.

\begin{example}
    Let us consider $\MM=(2\Z)^2$, $\A=\{(0,0),(4,2),(2,4)\}$ and $\bb=(2,2)\in \conv(\A)\cap (2\Z)^2$. Note that there is choice to construct a $\A$-mediated graph that contains $\bb$, and then $\M_\A(\bb)=\emptyset$.
\end{example}

 In view of the possible nonexistence of mediated graphs, \cite{powers2021note} provide a lower bound for the \emph{dilation} constant $k\in \Z_+$ such that $\M_{k\A}(\B)$ is nonempty for all  $\A\subset \MM$, where $k\A:=\{k\a\in (2\Z)^d: \a\in\A\}$ and $\B\subset \conv(k\A)\cap \Z^d$ in the case $\MM=(2\Z)^d$.

\begin{theorem}[\cite{powers2021note}]
    Let $\A\subset (2\Z)^d$ be an affine independent set. Then, for every integer $k\geq \{2, |\A|-2\}$, there exists a $(k\A)$-mediated graph, $G=(\conv(k\A)\cap \Z^d,A)$ such that $\delta^+({\vv})\subset (2\Z)^d$ for every vertex ${\vv}\in \conv(k\A)\cap \Z^d$.
\end{theorem}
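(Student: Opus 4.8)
The plan is to reduce the statement to a dilation estimate for a single simplex domain. Since $\A\subset(2\Z)^d$ is affinely independent, after an affine change of coordinates that preserves the lattice structure (composing with an element of the affine group over $\Z$), it suffices to handle the case where $k\A$ spans a standard simplex, say with vertices $k\cdot 2\e_0=\0,\; 2k\e_1,\dots,2k\e_{|\A|-1}$ (up to the embedding into $\R^d$). What we must show is that for $k\geq\max\{2,|\A|-2\}$ every lattice point $\bb\in\conv(k\A)\cap\Z^d$ can be realized as the midpoint of two lattice points of $\conv(k\A)\cap\Z^d$, each of which lies in $(2\Z)^d$, and that iterating this halving operation terminates at the vertices $k\A$. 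Concretely I would proceed by induction on a suitable ``height'' of $\bb$ — for instance the sum of the barycentric numerators $\|\s^\bb\|_1$ with respect to $\A$ (where $\frac1{\|\s^\bb\|_1}\s^\bb$ are the barycentric coordinates of $\bb$), which is a positive integer since $\A$ is a lattice simplex; the base case being $\bb\in k\A$ itself.

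The inductive step is the heart of the argument. Given $\bb$ interior to $\conv(k\A)$ with barycentric numerator vector $\s=\s^\bb$, I need to pick a direction $\d$ such that $\bb+\d$ and $\bb-\d$ both lie in $\conv(k\A)\cap(2\Z)^d$ and both have strictly smaller height, so that the recursion bottoms out at $k\A$. The natural candidates for $\d$ are (even multiples of) edge directions $\cc-\cc'$ of the simplex, i.e. moves that transfer barycentric weight from one vertex $\cc'$ to another $\cc$. For $\bb\pm\d$ to be even-lattice points one needs the weight transferred to be even, and here the factor $k$ in the dilation is what buys the room: the hypothesis $k\geq\max\{2,|\A|-2\}$ is exactly what guarantees that some vertex carries enough weight (at least $2$, or a parity allowing a valid split) so that a legal even move exists, and that after the move the point stays in the dilated simplex. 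This is essentially the computation behind Lemma~\ref{lemma:wang} and its predecessor \cite[Corollary 28]{wang2024weighted}, applied now uniformly over \emph{all} lattice points of the simplex rather than a single target; I would invoke the pigeonhole/averaging fact that among $|\A|$ nonnegative integers summing to $k\|\bb\text{'s barycentric denominator}\|$ — a multiple of $2k$ — one can always find an index whose coordinate admits an even decrement keeping feasibility, precisely when $k\geq|\A|-2$.

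Once the halving step is established, the mediated graph is built top-down: set $V=\conv(k\A)\cap\Z^d$, and for each non-vertex ${\vv}\in V$ add the two arcs ${\vv}\to{\vv}+\d_{\vv}$ and ${\vv}\to{\vv}-\d_{\vv}$ from the chosen direction $\d_{\vv}$; by construction $\delta^+({\vv})\subset(2\Z)^d$, the midpoint condition $2{\vv}-{\ww}\in\delta^+({\vv})$ holds, and since every step strictly decreases height the resulting digraph has no directed cycles and every branch terminates in $k\A$, so $G\in\M_{k\A}(\B)$ for any $\B\subset\conv(k\A)\cap\Z^d$ (indeed $\B=V$). I expect the main obstacle to be the bookkeeping in the inductive step: showing that a \emph{single} choice of move works simultaneously to (i) land in $(2\Z)^d$, (ii) stay inside $\conv(k\A)$, and (iii) strictly decrease the height — the tightness of the bound $k\geq|\A|-2$ suggests the feasible region for $\d$ can be razor-thin, so the parity analysis on the barycentric coordinates must be done carefully, likely by choosing $\d$ to move weight into whichever vertex currently has the smallest even-adjusted coordinate. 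The cases $|\A|\leq 4$, where the bound reads $k\geq2$, should be checked directly as they are not covered by the ``$\geq|\A|-2$'' regime in a useful way.
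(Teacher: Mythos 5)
First, note that the paper does not prove this statement at all: it is quoted verbatim from \cite{powers2021note} as an imported result, so there is no in-paper argument to compare yours against; your proposal has to stand on its own, and it has a genuine gap. The fatal step is the opening reduction: an affinely independent set $\A\subset(2\Z)^d$ is \emph{not}, in general, mapped to (a dilate of) the standard simplex by a lattice-preserving affine transformation. Unimodular affine maps preserve the normalized volume and the full Ehrhart data of a lattice simplex, and only unimodular simplices are equivalent to the standard one. The entire difficulty of the Powers--Reznick theorem lives in the non-unimodular cases, where $\conv(k\A)\cap\Z^d$ contains points whose barycentric coordinates have denominators dividing $k$ times the normalized volume rather than $k$ itself; for those points your ``height'' $\|\s^\bb\|_1$ is not the integer bookkeeping device you treat it as, and the set of available lattice moves is not generated by edge directions of the simplex. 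Everything downstream of the reduction (the induction, the pigeonhole claim, the role of $k\ge\max\{2,|\A|-2\}$) is therefore argued only in the easy special case.

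Two further problems would remain even in the standard-simplex case. First, the parity analysis is never actually carried out: for $\bb\pm\d$ to lie in $(2\Z)^d$ you need $\d\equiv\bb\pmod 2$, so for $\bb\notin(2\Z)^d$ the displacement cannot be an ``even multiple of an edge direction'' as you propose, and boundary points of $\conv(k\A)$ (where $\d$ must be parallel to the supporting face) need separate treatment; the bound $k\ge\max\{2,|\A|-2\}$ is asserted to be ``exactly what guarantees'' a legal move but never enters a verifiable computation. Second, the induction-on-height and acyclicity machinery is unnecessary: since the vertex set is fixed to be all of $\conv(k\A)\cap\Z^d$, Definition \ref{def:mg} only asks you to exhibit, for each ${\vv}\in\conv(k\A)\cap\Z^d\setminus k\A$, two distinct points of $\conv(k\A)\cap(2\Z)^d$ averaging to ${\vv}$ --- no termination at $k\A$ is required. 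So the one claim that actually needs proof is precisely the midpoint-existence claim, and that is the part your sketch leaves unproved.
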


\section{Optimal Mediated Graphs}\label{sec:optmg}

The family of $\A$-mediated graphs can be endowed with a partial order induced by the cardinality of their vertex sets. Specifically, given $G=(V,A), G'=(V',A')\in \M_\A$:

\begin{equation}
    G\preceq G' \text{ if either }|V|<|V'| \text{ or }G=G'.
\end{equation} 

Based on this partial order, the family of $\A$-mediated graphs for a given finite set $\A \subset \MM$ has two distinguished elements that arise when minimizing and maximizing the number of vertices in the graph.  Since the enumeration of this family of graphs can be, in general, cumbersome, we develop approaches to construct these \emph{optimal} graphs using mathematical optimization tools, which, as already announced, is the main contribution of this paper. The study and construction of these so-called minimal and maximal mediated graphs is motivated by their implications in different problems in convex algebraic geometry, and then, in its application to efficiently represent useful convex optimization problems.

We separately analyze minimal and maximal mediated graphs, because of different reasons. On the one hand, Minimal Mediated Graphs (MinMG, for short) have already been proven to be useful structures to derive the most efficient and simple SOC or SOS representations of other more complex convex sets, with a high impact in practical convex optimization problems~\cite[see, e.g.,][]{blanco2014revisiting,blanco2024minimal,wang2024weighted}. Furthermore, MinMG can be analyzed in \emph{discrete} and \emph{continuous} domains referring to the cardinality of $\conv(\A)\cap \MM$, finite and infinite, respectively. On the other hand, the known applications of Maximal Mediated Graphs (MaxMG, for short)  are related to the existence of SOS decomposition of circuit polynomials \citep{hartzer2022initial,reznick1989forms}. Clearly, the construction of MaxMG is only possible in discrete domains. 

In this section, we analyze some properties of MinMG and MaxMG and propose different approaches for their construction.

\subsection{Minimal Mediated Graphs}\label{sec:minimal}

Note that the unique minimal mediated graph of $\M_\A$ is the edgeless graph with vertex set $\A$. Thus, we analyze here MinMG restricted to the family $\M_\A(\B)$  for a nonempty set $\B \subset \conv(\A)\cap\MM$ (see \eqref{eq:subsets}). In what follows, we formally introduce this subgraph.

\begin{definition}
    Let $\A, \B \subset \MM$ be finite sets. We say that $G=(V,A) \in \M_\A(\B)$, is a Minimal $\A$-Mediated Graph for $\B$ if there not exists $G'=(V',A') \in \M_\A(\B)$ such that $|V'|<|V|$.
\end{definition}

We denote by $\MinMG_\A(\B)$ the family of minimal $\A$-mediated graphs for $\B$. Notice that a minimal mediated graph is a minimal element in the poset $(\M_\A(\B),\preceq)$.


Some properties can be derived for this family of mediated graphs 

\begin{theorem}\label{theo:connected}
   Let $\A\subset \MM$ and $\B \subset \MM$. Then, any minimal mediated graph $G\in \M_\A(\B)$ has at most $|\B|$ weak connected components.
\end{theorem}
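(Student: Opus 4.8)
The plan is to argue by contradiction: suppose a minimal mediated graph $G=(V,A)\in\M_\A(\B)$ has strictly more than $|\B|$ weak connected components. The key structural fact I would rely on is Remark~\ref{rem:1}: we may assume $\deg^+(\vv)=2$ for every $\vv\in V\setminus\A$ and $\deg^+(\a)=0$ for $\a\in\A$. Hence every nontrivial piece of the graph must ``terminate'' at vertices of $\A$, which is what forces interaction with $\B$.

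First I would decompose $G$ into its weak connected components $G_1,\dots,G_m$ with $m>|\B|$. I would then classify each component: a component $G_i$ is called \emph{trivial} if it consists of a single isolated vertex (which then must lie in $\A$, since any vertex not in $\A$ has outdegree $2$ and hence is not isolated), and \emph{nontrivial} otherwise. A trivial component contributes nothing but an isolated point of $\A$ and can simply be discarded: removing it yields a graph still in $\M_\A(\B)$ (using Properties~\ref{prop:prop}.\ref{prop:2} to absorb the removed point of $\A$ as an ``isolated vertex'' that no longer needs to be present, together with the convention in Remark~\ref{rem:1} that $\A\ne\emptyset$ is all we need). If there is any trivial component at all, this strictly decreases $|V|$, contradicting minimality; so every component is nontrivial. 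The crux is then to show that each nontrivial component must contain at least one point of $\B$, which immediately gives $m\le|\B|$, the desired contradiction.

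To show a nontrivial component $G_i=(V_i,A_i)$ contains a point of $\B$: observe that $(V_i, A_i)$ is itself an $(\A\cap V_i)$-mediated graph (the mediated property is local to each vertex and its two children, which all lie in the same component). Since $G_i$ is nontrivial it is not the empty graph, and it is not the edgeless graph on $\A\cap V_i$ (an edgeless graph has no arcs, so its only component structure is isolated points — contradicting nontriviality as a single connected component unless $|V_i|=1$). Now suppose for contradiction that $V_i\cap\B=\emptyset$. Then consider the graph $G'=(V\setminus V_i,\,A\setminus A_i)$. I claim $G'\in\M_\A(\B)$: every vertex of $V\setminus\A$ that remains still has both its children present (children of a vertex in $V\setminus V_i$ lie in $V\setminus V_i$, since $G_i$ is a full weak component), so the mediated property is preserved; the set $\A$ is still contained in $V\setminus V_i$ up to isolated points (those points of $\A\cap V_i$ become irrelevant by Properties~\ref{prop:prop}.\ref{prop:2} and Remark~\ref{rem:1}); and $\B\subset V\setminus V_i$ because $V_i\cap\B=\emptyset$. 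Since $G_i$ is nontrivial, $|V_i|\ge 2>0$, so $|V\setminus V_i|<|V|$, contradicting the minimality of $G$. Therefore $V_i\cap\B\ne\emptyset$ for every component, and since distinct components have disjoint vertex sets, the number of components is at most $|\B|$.

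\textbf{Main obstacle.} The delicate point is the bookkeeping around $\A$: when we delete a component, some points of $\A$ may disappear, and we must make sure the resulting graph is still legitimately an $\A$-mediated graph. This is handled by the paper's convention (Properties~\ref{prop:prop}.\ref{prop:2} and Remark~\ref{rem:1}) that isolated vertices of $\A$ may be added or removed freely without affecting membership in $\M_\A(\B)$ — so a point of $\A$ that we delete along with a component can be formally re-appended as an isolated vertex if one insists on literally retaining $\A\subset V$, and this does not change the count $|V|$ in a way that breaks the strict inequality. One should also double-check the edge case $|V_i|=1$: such a component is exactly an isolated point of $\A$ (the trivial case already treated) and contains no point of $\B$ unless that point of $\A$ is also in $\B$, in which case it trivially contributes one to both sides — so the bound still holds. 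Apart from this care with the convention, the argument is a routine ``remove a useless piece'' minimality argument.
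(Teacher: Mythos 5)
Your argument is essentially the paper's: both are ``delete the piece not tied to $\B$ and contradict minimality'' proofs. The paper restricts to the set of vertices reachable by a directed path from $\B$ and observes that this restriction is still an $\A$-mediated graph containing $\B$; you instead delete an entire weak component disjoint from $\B$. The two deletions are not identical (a component meeting $\B$ may still contain vertices with no directed path from $\B$, so the paper's argument in fact proves the slightly stronger statement that every vertex is reachable from $\B$), but both yield the conclusion that every weak component meets $\B$, hence the bound $|\B|$. Your component-wise version actually makes the counting step for general $\B$ cleaner: the paper's written proof derives a contradiction from ``$G$ is not weakly connected,'' which for $|\B|>1$ is not by itself incompatible with minimality, whereas you correctly start from ``more than $|\B|$ components implies some component is disjoint from $\B$.''

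One step of yours does not hold up as written: the claim that a trivial component (an isolated vertex, necessarily a point of $\A$) can be discarded to contradict minimality. Definition~\ref{def:mg} requires $\A\subset V$, so an isolated $\a\in\A$ cannot be removed, and re-appending it (as you concede in your ``main obstacle'' paragraph) restores $|V|$, so no contradiction with minimality arises. Fortunately this step is not needed: your main argument only requires that every \emph{nontrivial} component meets $\B$, and any nontrivial component contains a vertex of $V\setminus\A$ (since arcs only leave such vertices), so deleting it does strictly decrease $|V|$ even after re-appending its $\A$-points as isolated vertices. The residual issue of isolated $\A$-vertices forming extra components is a defect of the theorem's literal statement (the paper's own proof silently restricts to vertices reachable from $\B$ and thus also drops unreachable points of $\A$), handled only by the convention of Properties~\ref{prop:prop}.\ref{prop:2} that isolated vertices of $\A$ are not counted; you identify this correctly.
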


\begin{proof}
Let us first analyze the case when $\B=\{\bb\}$. In case $\M_\A(\bb)=\emptyset$ is clear. We prove the lemma by contraposition for nonempty $\M_\A(\bb)$. Suppose the graph $G=(V,A)\in \M_\A(\bb)$ is not weakly connected. Thus, one can construct the graph, $G'=(V',A')$ where: 
\begin{gather*}
    V':=\{{\vv}\in V: \text{ there exists a path from }\bb\text{ to }{\vv}\}\\
    A':=A\cap(V'\times V').
\end{gather*}
Note that $G'$ is a $\A$-mediated graph with $\bb\in V'$. Given ${\vv}\in V'\backslash \A$, by definition, there exists a path in $G$ from $\bb$ to ${\vv}$. If ${\ww}$ and $2{\vv}-{\ww}$ are the children of ${\vv}$ in $G$, then ${\ww}, {\vv}-{\ww} \in V'$, and then $({\vv},{\ww}), ({\vv},2{\vv}-{\ww}) \in A'$. Since, clearly, $\bb \in V'$, $G'\in \M_\A(\bb)$. As $G$ is not weakly connected, $V'\subsetneq V$, then $|V'|<|V|$, contradicting the minimality of $G$.

    Let us now analyze the general case. If $G=(V,A)\in\M_\A(\B)$ which is not weakly connected, it is enough to consider the graph $G''=(V'',A'')$ defined by
    
    \begin{gather}\label{eq:path}
        V'':=\{{\vv}\in V: \text{ there exists a path from }\B\text{ to }{\vv}\},\\
        A'':=A\cap(V''\times V'')\nonumber
    \end{gather}
    which, analogously to the singleton case, is $\M_\A(\bb)$, with $|V''|<|V|$, contradicting the minimality of $G$.
\end{proof}

The following result is straightforward from the construction in the previous result, and provides us information about the indegrees of the vertices in a minimal mediated graph.
\begin{corollary}
    If $G\in \M_\A(\B)$ is a minimal mediated graph, then $\deg^-({\vv})\geq 1$ for all ${\vv}\in V\backslash \B$.
\end{corollary}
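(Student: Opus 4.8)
The plan is to extract the claim directly from the constructions used in the proof of Theorem \ref{theo:connected}. The key observation is that in that proof, given a minimal mediated graph $G=(V,A)\in\M_\A(\B)$, one builds the subgraph $G''=(V'',A'')$ whose vertex set $V''$ consists of all vertices reachable by a directed path starting from some point of $\B$, and one shows $G''\in\M_\A(\B)$. Minimality of $G$ forces $V''=V$; that is, \emph{every} vertex of $V$ lies on a directed path emanating from $\B$.

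First I would make this reachability statement precise: for every ${\vv}\in V$ there exists $\bb\in\B$ and a directed path $\bb={\vv}_0\to{\vv}_1\to\cdots\to{\vv}_k={\vv}$ in $G$. If this failed for some ${\vv}$, then the graph $G''$ from \eqref{eq:path} would be a proper mediated subgraph in $\M_\A(\B)$ with $|V''|<|V|$, contradicting minimality exactly as in the proof of Theorem \ref{theo:connected}. Next, I would take an arbitrary ${\vv}\in V\backslash\B$ and apply the reachability statement to it. Since ${\vv}\notin\B$, the path witnessing reachability has length $k\geq 1$, so there is a vertex ${\vv}_{k-1}$ with $({\vv}_{k-1},{\vv})\in A$, i.e.\ ${\vv}_{k-1}\in\delta^-({\vv})$. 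Hence $\deg^-({\vv})\geq 1$, which is the assertion.

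There is essentially no obstacle here; the only point requiring a line of care is the case ${\vv}\in\A\setminus\B$ (recall $\A$ need not be contained in $\B$). This is handled uniformly: the reachability argument does not distinguish vertices of $\A$ from other vertices — a vertex of $\A$ has outdegree zero but can certainly have incoming arcs — so the same path argument gives an in-neighbour of ${\vv}$ whenever ${\vv}\notin\B$. One should also note the degenerate case $\M_\A(\B)=\emptyset$, where the statement is vacuous, and the case where $G$ is the edgeless graph on $\A$, which can only be minimal in $\M_\A(\B)$ when $\B\subseteq\A$, so that $V\backslash\B\subseteq\A\backslash\B$ consists of isolated vertices of $\A$ — but then each such ${\vv}$ would violate reachability unless $V\backslash\B=\emptyset$, so this case does not actually arise for nonempty $\B\not\subseteq$ (isolated part). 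In the write-up I would simply invoke the construction of $G''$ from the proof of Theorem \ref{theo:connected} and conclude in two sentences.
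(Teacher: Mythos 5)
Your argument follows the paper's intended route exactly: the paper derives this corollary directly from the pruning construction $G''$ in the proof of Theorem \ref{theo:connected}, and your write-up simply makes the reachability step explicit. For ${\vv}\in V\backslash(\A\cup\B)$ the argument is sound: if ${\vv}$ were not reachable from $\B$ by a directed path it could be deleted, contradicting minimality, so some path of length at least one ends at ${\vv}$ and its last arc provides an in-neighbour.

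The gap is precisely the case you flag and then dismiss, namely ${\vv}\in\A\backslash\B$. Minimality cannot force vertices of $\A$ to be reachable from $\B$, because they can never be pruned: every graph in $\M_\A(\B)$ must contain $\A$ in its vertex set, so the reachable-set construction must really be $V''=\A\cup\{\text{reachable vertices}\}$ in order for $G''$ to be an $\A$-mediated graph at all (as written in \eqref{eq:path}, $V''$ need not contain $\A$). Consequently minimality only yields that every vertex of $V\backslash\A$ is reachable, i.e., $\deg^-({\vv})\geq 1$ for ${\vv}\in V\backslash(\A\cup\B)$. The stronger claim genuinely fails: take $\A=\{(0,0),(2,0),(0,2)\}$ and $\B=\{(1,1)\}$; the graph with $V=\A\cup\B$ and arcs $((1,1),(2,0))$ and $((1,1),(0,2))$ is a minimal element of $\M_\A(\B)$, yet $\deg^-((0,0))=0$ with $(0,0)\in V\backslash\B$. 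So your sentence asserting that ``the same path argument gives an in-neighbour of ${\vv}$ whenever ${\vv}\notin\B$'' is exactly the step that does not go through; one needs either to restrict the conclusion to $V\backslash(\A\cup\B)$ or to add hypotheses in the spirit of Properties \ref{prop:prop}.\ref{prop:last}. This imprecision is arguably inherited from the paper's own terse justification, but your proof explicitly claims to close that case and does not.
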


The following result is a straightforward consequence of Theorem \ref{theo:ub} giving us an upper bound for the cardinality of a minimal mediated graph.

\begin{corollary}\label{coro:ub}
Let $\A, \B \subset \R^d$ and $G=(V,A)\in \MinMG_\A(\B)$. Then, $$|V|\leq \nu_{\A}(\B).$$
\end{corollary}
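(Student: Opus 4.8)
The plan is to observe that Corollary~\ref{coro:ub} is the combination of two facts already established in the excerpt: Theorem~\ref{theo:ub} guarantees the \emph{existence} of some nonempty graph in $\M_\A(\B)$ with at most $\nu_\A(\B)$ vertices, and the definition of $\MinMG_\A(\B)$ says that a minimal mediated graph has the smallest possible vertex set among all members of $\M_\A(\B)$. So the argument is essentially a one-line sandwich.

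Concretely, first I would invoke Theorem~\ref{theo:ub}: since $\B\subset\conv(\A)$, there is a nonempty $G'=(V',A')\in\M_\A(\B)$ with $|V'|\le\nu_\A(\B)$. Next, let $G=(V,A)\in\MinMG_\A(\B)$; in particular $\M_\A(\B)\neq\emptyset$, which is consistent with the previous step. By the defining property of minimality, there is no element of $\M_\A(\B)$ with strictly fewer vertices than $G$; applying this to $G'$ gives $|V|\le|V'|$. Chaining the two inequalities yields $|V|\le|V'|\le\nu_\A(\B)$, which is the claim.

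The only thing to be slightly careful about is the hypothesis on $\B$: Theorem~\ref{theo:ub} requires $\B\subset\conv(\A)$, whereas the corollary as stated only assumes $\A,\B\subset\R^d$. However, the corollary is implicitly within the running setting where $\M_\A(\B)$ is nonempty (otherwise $\MinMG_\A(\B)=\emptyset$ and there is nothing to prove), and by Properties~\ref{prop:prop}.\ref{prop:chull} any $G\in\M_\A(\B)$ has $V\subset\conv(\A)$, forcing $\B\subset\conv(\A)$; so the hypothesis of Theorem~\ref{theo:ub} is automatically met whenever the statement is non-vacuous. I would either add this remark or simply assume $\B\subset\conv(\A)$ as the paper does elsewhere.

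There is no real obstacle here — the result is labelled ``straightforward consequence'' for good reason. The substantive content was already spent in proving Theorem~\ref{theo:ub} (via Lemma~\ref{lemma:wang} and the union property Properties~\ref{prop:prop}.\ref{prop:union}); the corollary is just the remark that an upper bound on the minimum of a nonempty set follows from exhibiting one feasible element attaining that bound.
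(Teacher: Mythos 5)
Your proposal is correct and matches the paper's intent exactly: the paper gives no separate proof, calling the corollary a straightforward consequence of Theorem~\ref{theo:ub}, and your one-line sandwich (minimality of $G$ applied to the witness graph from Theorem~\ref{theo:ub}) is precisely that argument. Your side remark reconciling the missing hypothesis $\B\subset\conv(\A)$ via Properties~\ref{prop:prop}.\ref{prop:chull} is a sensible and accurate addition.
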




Computing $\MinMG_\A(\B)$ requires determining both the vertices in $\conv(\A)$ and the arcs that verify the \emph{mediated} condition at minimum cardinality. For particularly structured sets $\A$, and singleton sets $\B$, \cite{wang2024weighted} propose a brute force algorithm and different heuristic approaches to compute $\MinMG_\A(\B)$ focused on deriving minimal SOCP representations of weighted geometric mean inequalities. \cite{blanco2024minimal} provide a mixed integer linear programming model (MILP) to compute this set. In both cases, $\MM=\R^d$ for a given dimension $d\geq 1$. This framework will be named in this work as the \emph{continuous domain} minimal mediated graph, since the vertices are to be found in the continuous region $\conv(\A)$. On the other hand, in case $\MM$ is a \emph{lattice} (as $\Z^d$ or $(2\Z)^d$), we call this framework the \emph{{discrete domain}} minimal mediated set. The main difference stems in that the search of the vertices and links in the continuous domain case is to be done among infinitely many vectors, whereas in the discrete domain case, the search is done in a finite set of points. By analogy with facility location theory, this is exactly the same difference between the family of continuous location problems and discrete location problems.

In what follows we analyze minimal mediated sets for these two families of domains and derive mathematical optimization models that allows to compute the minimal mediated graphs, avoiding, in general the enumeration of the potential minimal subgraphs.

\subsubsection*{Minimal Mediated Graphs in $\R^d$}

The problem of deriving mathematical optimization models for the continuous case was already addressed by \cite{blanco2024minimal} for the vertex sets of dilations of the standard simplex and a single interior lattice point. Below we state the result for the general continuous domain case, which is a straightforward extension of the formulation provided in the mentioned paper.

\begin{theorem}\label{th:continuous}
Let $\A\subset \R^d$ be a finite set of points, and $\B \subset \conv(\A)$. The following mixed integer linear programming model allows to compute $\MinMG_\A(\B)$ for $\MM=\R^d$.

\begin{align}\label{eq:cminmg}\tag{C-MinMGP}
&& \nonumber\\
\text{Minimize } & \; |\A| + |\B|  + \sum_{{\vv}\in \V\backslash(\A\cup \B)} z_{\vv} & \label{c:obj}\\
\text{subject to }
& \; y_{{\vv}{\ww}} \leq z_{\vv},  & \forall {\vv} \in \V\backslash \A, {\ww}\in \V,\label{c:2}\\
& \; y_{{\vv}{\ww}} \leq z_{\ww},  & \forall {\vv} \in \V\backslash \A, {\ww}\in \V, \label{c:3}\\
& \; 2\x_{\vv} \geq \x_{\ww} + \x_{\uu}  - \Delta (2- y_{{\vv}{\ww}}-y_{{\vv}{\uu}}), & \forall {\vv} \in \V\backslash \A, {\ww}\neq {\uu} \in \V, \label{c:4}\\
& \; 2\x_{\vv} \leq \x_{\ww} + \x_{\uu} + \Delta (2- y_{{\vv}{\ww}}-y_{{\vv}{\uu}}), & \forall {\vv} \in \V\backslash\A, {\ww}\neq {\uu} \in \V,\label{c:5}\\
 & \; \|\x_{\vv} - \x_{\ww}\|_1 \geq \varepsilon (z_{\vv}+z_{\ww}-1), & \forall {\vv}\neq{\ww} \in \V,\label{c:6}\\
& \; \dsum_{{\ww} \in \V} y_{{\vv}{\ww}} = 2 z_{\vv}, & \forall {\vv} \in \V\backslash \A,\label{c:1}\\
 & \; \x_{{\vv}} = {\vv}, \; z_{{\vv}}=1 & \forall {\vv} \in \A\cup\B, \label{c:7}\\
& \; y_{{\vv}{\ww}} \in \{0,1\}, & \forall {\vv} \in \V\backslash\A, {\ww} \in \V,\label{c:8}\\
& \; z_{\vv} \in \{0,1\}, & \forall {\vv} \in \V,\label{c:9a}\\
& \; \x_{\vv} \in \R^{d}, & \forall {\vv} \in \V,\label{c:9b}
\end{align}

where $\V$ is any index set satisfying: $\A,\B\subset \V$, and $|\V|=\nu_{\A}(\B)$. The parameters $\Delta:=\dmax\{\|\a\|_{\infty}: \a\in \A\}$, and $\varepsilon$ is a small enough tolerance value. 
\end{theorem}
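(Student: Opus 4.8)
The plan is to establish a bijection between feasible solutions of \eqref{eq:cminmg} (more precisely, the solutions realizing the optimal value) and the graphs in $\MinMG_\A(\B)$ for $\MM = \R^d$, thereby showing that any optimal solution of the model encodes a minimal mediated graph and vice versa. The variables have the intended reading: $z_{\vv}$ indicates whether the candidate vertex ${\vv} \in \V$ is actually used, $\x_{\vv}$ is its location in $\R^d$, and $y_{{\vv}{\ww}}$ indicates whether $({\vv},{\ww})$ is an arc. I would first argue that $\V$ with $|\V| = \nu_\A(\B)$ is large enough to host some minimal mediated graph: this is exactly Corollary \ref{coro:ub}, which guarantees $|V| \le \nu_\A(\B)$ for every $G \in \MinMG_\A(\B)$, so we may label the vertices of any such $G$ by a subset of $\V$ containing $\A \cup \B$ (using \eqref{c:7} to pin those down).

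Next I would check the two directions. \emph{From a mediated graph to a feasible solution:} given $G=(V,A) \in \M_\A(\B)$ with $|V| \le |\V|$, embed $V \hookrightarrow \V$ fixing $\A\cup\B$, set $z_{\vv}=1$ iff ${\vv} \in V$, $\x_{\vv}$ equal to the actual coordinates of the vertex (arbitrary for unused ${\vv}$), and $y_{{\vv}{\ww}}=1$ iff $({\vv},{\ww}) \in A$. By Remark \ref{rem:1} we may assume $\deg^+({\vv})=2$ on $V\setminus\A$ and $0$ on $\A$, which gives \eqref{c:1}; \eqref{c:2}--\eqref{c:3} hold since arcs only join used vertices; the midpoint identity $2{\vv} = {\ww}+{\uu}$ for the two children makes \eqref{c:4}--\eqref{c:5} tight when $y_{{\vv}{\ww}}=y_{{\vv}{\uu}}=1$ and vacuous otherwise (here one needs $\Delta = \max\{\|\a\|_\infty : \a\in\A\}$ to dominate the slack, using Properties \ref{prop:prop}.\ref{prop:chull} so that all coordinates lie in $\conv(\A)$); and \eqref{c:6} holds because distinct used vertices have distinct coordinates, for $\varepsilon$ chosen below the minimum pairwise $\ell_1$-distance among the finitely many relevant configurations. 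The objective then counts $|\A|+|\B|+|V\setminus(\A\cup\B)| = |V|$. \emph{From a feasible solution to a mediated graph:} conversely, read off $V := \{{\vv} : z_{\vv}=1\}$ and $A := \{({\vv},{\ww}) : y_{{\vv}{\ww}}=1\}$ with locations $\x_{\vv}$. Constraint \eqref{c:6} forces the used vertices to occupy distinct points, so $V$ is genuinely a point set; \eqref{c:1} forces outdegree exactly $2$ on $\V\setminus\A$ (and $0$ vertices of $\A$ have no constraint, but \eqref{c:2}--\eqref{c:3} and the objective push their $y$'s to $0$); and \eqref{c:4}--\eqref{c:5} force, for each used ${\vv}\notin\A$ with its two out-neighbors ${\ww},{\uu}$, the identity $2\x_{\vv} = \x_{\ww}+\x_{\uu}$, i.e. ${\vv}$ is the midpoint of ${\ww}$ and ${\uu}$ — equivalently $2{\vv}-{\ww} \in \delta^+({\vv})$. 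Hence $(V,A) \in \M_\A(\B)$ with $|V|$ equal to the objective.

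Combining the two directions, minimizing the objective of \eqref{eq:cminmg} over feasible solutions yields exactly $\min\{|V| : (V,A)\in \M_\A(\B)\}$, and an optimal solution decodes to a member of $\MinMG_\A(\B)$; conversely every minimal mediated graph arises this way. The main obstacle — and the place where care is genuinely needed — is the correctness of the big-$M$ and tolerance constants: one must verify that $\Delta = \max\{\|\a\|_\infty : \a\in\A\}$ is large enough that \eqref{c:4}--\eqref{c:5} impose no spurious restriction when an arc pair is absent (this uses $V\subset\conv(\A)$, so $\|\x_{\vv}-\x_{\ww}\|_\infty \le 2\Delta$ coordinatewise), and that there is a strictly positive $\varepsilon$ separating distinct vertices; since the coordinates solving midpoint systems over an affinely independent-generated configuration are rationals with bounded denominators, such an $\varepsilon$ exists, though making ``small enough'' precise is the delicate point. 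I would also note that since this is stated as a straightforward extension of \citep{blanco2024minimal}, the argument can largely be imported from there, with the only new ingredient being the replacement of their simplex-and-single-point setting by the general $\A,\B$ via Theorem \ref{theo:ub} and Properties \ref{prop:prop}.
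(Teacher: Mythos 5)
Your proposal is correct and follows essentially the same route as the paper: interpret $z$, $y$, $\x$ as vertex-activation, arc, and coordinate variables, check that constraints \eqref{c:2}--\eqref{c:9b} encode exactly the mediated-graph conditions (outdegree two, midpoint identity, distinctness, anchoring of $\A\cup\B$), and let the objective enforce minimality. The paper's own proof is just this constraint-by-constraint reading; your version adds the explicit two-direction correspondence, the appeal to Corollary \ref{coro:ub} for the sufficiency of $|\V|=\nu_\A(\B)$, and a welcome flag on the delicacy of the constants $\Delta$ and $\varepsilon$, none of which changes the underlying argument.
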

\begin{proof}
    In the above formulation, the following variables are used:
$$
z_{\vv} = \begin{cases}
    1 & \mbox{if ${\vv}$ is a vertex of the minimal mediated graph,}\\
    0 & \mbox{otherwise}
\end{cases},
$$
$$
y_{{\vv}{\ww}} = \begin{cases}
    1 & \mbox{if $({\vv},{\ww})$ is an arc of the minimal mediated graph,}\\
    0 & \mbox{otherwise}
\end{cases},
$$
$$
\x_{\vv} \in \R^d: \mbox{Coordinates of the vertices in the MinMG in case $z_{\vv}=1$}, \forall {\vv}\in \V.
$$

Constrains \eqref{c:2} and \eqref{c:3} determines that an arc in the mediated graph is possible just in case the two extremes are vertices of the graph. Constraints \eqref{c:4} and \eqref{c:5} assures the correct construction of a mediated graph, that is, if $({\vv},{\ww}), ({\vv},{\uu})$ are arcs in the mediated graph, then, ${\vv}$ is the midpoint of ${\ww}$ and ${\uu}$. Constraints \eqref{c:6} assures that each vertex, seen as a embedded coordinate in $\R^d$, is activated only once. Constraint \eqref{c:1} is derived from Remark \ref{rem:1} where  mediated graph is simplified to one with exactly two outgoing arcs from each \emph{proper} mediated vertex. Finally, Constraints \eqref{c:7} set as vertices of the mediated graph those in $\A \cup \B$. The minimality is assured by the minimization criterion on the number of mediated vertices \eqref{c:obj}.
\end{proof}

Note that the Manhattan norm constraints are nonlinear but they can  be standardly rewritten as linear constraints. Thus, the model above results in a MILP problem.\\

\noindent{\bf Optimization over Generalized Power Cones}

Minimal mediated graphs in $\R^d$ have a direct impact in the solution of conic optimization problems. More specifically, a generalized power cone program has the following shape:
\begin{align} \label{eq:gpcp}\tag{GPCP}
    \text{minimize } &\; {\c}^T{\x} + {\d}^T \z\nonumber\\
    \text{subject to }&\; A{\x}+B{\z}= {\bb}, \nonumber\\
                        &\; \|{\x}\|_p\leq {\z}^{\ba}\nonumber\\
                        & \; {\x}\in\R^n,\nonumber \\
                        &\; {\z}\in \R_+^d,\nonumber
                    \end{align}

where $p\in \R_{\geq 1}$, $\ba \in \Lambda_d := \{(\alpha_1, \ldots, \alpha_d) \in \R^+: \sum_{j=1}^d \alpha_j =1\}$ ($d$-dimensional standard simplex), $\|\cdot\|_p$ denotes the $\ell_p$-norm in $\R^{n}$, and for any $\z\in \R^d_+$ and $\ba \in \Lambda_d$, $\z^{\ba} = \prod_{j=1}^{d} z_j^{\alpha_j} = z_1^{\alpha_1} \cdots z_d^{\alpha_d}$ is the weighted geometric mean of $\z$ with weights $\ba$. 

The above problem, although belong to the family of convex conic problems, requires to be equivalently reformulated as a second-order cone problem to be numerically solved in practice (e.g., using the available off-the-shelf solvers). With solutions of \eqref{eq:cminmg} one can build a minimal extended reformulation of a GPCP into a SOCP. Let $\A_{m}:=\{\e_i: i=1,\ldots, m\}\cup\{\0\}\subset \R^{m}$ (where $\e_i$ stands for the $i$th unit vector), $\beta_p:=\frac{1}{p}$ and $\be_{\ba}:=(\alpha_1,\ldots, \alpha_{d-1})$. Then, each couple of mediated graphs $G_p=(V_p,A_p)\in \M_{\A_1}(\beta_p)$ and $G_{\ba}=(V_{\ba},A_{\ba})\in \M_{\A_{d-1}}(\be_{\ba})$ are in one-to-one correspondence to the following SOCP extended reformulation of \eqref{eq:gpcp}~\citep[see][for further details]{blanco2024minimal}:

\begin{align}
    \text{Minimize } & \; \c^T\x + \d^T \z & \nonumber\\
    \text{subject to } &\; A\x+B\z= \bb, \nonumber\\
                        &\; \dsum_{j=1}^{d} t_ j \leq \eta_{\be_{\ba}},\nonumber\\
                        &\; \xi_{j\beta_p}=x_j, \; \xi_{j1}=t_j, \; \xi_{j0}=\eta_{\be_{\ba}},  & j=1,\ldots,n,\nonumber \\
                        &\; \eta_{\e_i}=z_i, \; \eta_{\0}=z_d, & i=1,\ldots, d-1 \nonumber \\
                        &\; \xi_{jv}^2\leq \prod_{u\in \delta^+_p(v)}\xi_{ju} & j=1,\ldots, n; v\in V_p\backslash \A_p \nonumber\\
                        &\; \eta_{{\vv}}^2\leq \prod_{{\uu}\in \delta^+_{\ba}({\vv})}\eta_{{\uu}}, & {\vv}\in V_{\ba}\backslash \A_{\ba}, \nonumber\\
                        &\; \t, \z, \XI, \Eta \geq 0.\nonumber
\end{align}
The number of SOC constraints in the reformulation is $n(|V_p|-2)+|V_{\ba}|-d$, the number of variables is $n|V_p|+|V_{\ba}|$, and the number of linear constraints is $O(m+n|V_p|+|V_{\ba}|)$ taking into account the ones hiding in the SOC constraints. According with \cite{blanco2025complexity}, assuming that the coefficients of the input data $(A,B,\bb,\c,\d)$ have bit size at most $\tau$, then the feasibility of \eqref{eq:gpcp} can be tested in $m(n|V_p|+|V_{\ba}|)^{O(n|V_p|+|V_{\ba}|)}$ arithmetic operations over $\tau(n|V_p|+|V_{\ba}|)^{O(n|V_p|+|V_{\ba}|)}$-bit numbers. An $\varepsilon$-optimal solution of \eqref{eq:gpcp} can be obtained through binary search in $ (\tau + N)m(n|V_p|+|V_{\ba}|)^{O(n|V_p|+|V_{\ba}|)}$ arithmetic operations, where $N=2^{-\varepsilon}\in \Z_+$. So finding minimal mediated graphs is highly recommended to reduce the computational complexity of \eqref{eq:gpcp}. 

On the other hand, note that the solution of the problem allows to construct the minimal mediated graph, and then, the explicit SOCP representation of the problem. Specifically, let $(\x^*_p,\z^*_p,\y^*_p)$ be the solution of \eqref{eq:cminmg} for $\A_1$ and $\{\beta_p\}$, defining $V^*_p:=\{x^*_{pv} \in \R : z^*_{pv}=1\}$ and $A^*_p:=\{(x^*_{pv}, x^*_{pu})\in \R^2 : y^*_{pvu}=1\}$, then by Theorem \ref{th:continuous}, the mediated graph $G^*_p:=(V^*_p,A^*_p)\in \MinMG_{\A_1}(\beta_p)$. Likewise, let $(\x^*_{\ba},\z^*_{\ba},\y^*_{\ba})$ be the solution of \eqref{eq:cminmg} for $\A_{d-1}$ and $\{\be_{\ba}\}$, defining $V^*_{\ba}:=\{\x^*_{\ba {\vv}} \in \R^{d-1} : z^*_{\ba{\vv}}=1\}$ and $A^*_{\ba}:=\{(\x^*_{\ba{\vv}}, \x^*_{\ba{\uu}})\in \R^{d-1}\times \R^{d-1} : y^*_{\ba{\vv}{\uu}}=1\}$, then by Theorem \ref{th:continuous}, the mediated graph $G^*_{\ba}:=(V^*_{\ba},A^*_{\ba})\in \MinMG_{\A_{d-1}}(\be_{\ba})$. Hence, the SOCP defined by $G^*_p$ and $G^*_{\ba}$ is a minimal SOCP (also SDP) extended reformulation of \eqref{eq:gpcp}.

Similar applications of Theorem \ref{th:continuous} can be used to derive optimal SOCP reformulations in SONC optimization, matrix optimization, or quantum information. The reader is referred to~\citep{magron2023sonc,wang2024weighted} for further details.

\subsubsection*{Minimal Mediated Graphs in discrete domains}

The computation of discrete domain minimal mediated set, as far as we know, has not been previously addressed, and although the above model could be adapted (by enforcing that the $\x$-variables can only take feasible values in the finite set of points inside $\conv(\A) \cap \Z^d$ or $\conv(\A) \cap (2\Z)^d$, it would not exploit the discrete nature of the problem, and will result in an inefficient approach to compute $\MinMG$. In what follows we describe an alternative integer linear programming model (ILP) that we propose for the problem.
\begin{theorem}\label{th:discrete}
Let $\A, \B \subset \Z^d$ be two finite sets points with $\B \subset \conv(\A)$. The following integer linear programming model allows to compute a mediated graph in $\MinMG_\A(\B)$.
\begin{align}
    &&  \label{eq:dminmg}\tag{D-MinMGP}\\
    \text{Minimize } &\; |\A| + |\B|  + \sum_{{\vv}\in \V\backslash(\A\cup \B)} x_{\vv} & \label{minmg:obj}\\
    \text{subject to } & \; y_{{\vv}{\ww}} \leq x_{\vv}, &\forall {\vv}, {\ww}\in \V,\label{minmg:c1}\\
    & \; y_{{\vv}{\ww}} \leq x_{\ww}, &\forall {\vv}, {\ww} \in \V,\label{minmg:c2}\\
        & \; y_{{\vv} (2{\vv}-{\ww})} \geq y_{{\vv}{\ww}}, &\forall {\vv}\neq {\ww} \in \V \text{ if } 2{\vv}-{\ww} \in \V,\label{minmg:c3}\\
            &\; y_{{\vv}{\ww}} =0, &\forall {\vv}\neq {\ww} \in \V \text{ if } 2{\vv}-{\ww} \not\in \V,\label{minmg:c4}\\
    &\; \sum_{{\ww} \in V} y_{{\vv}{\ww}} = 2x_{\vv}, &\forall {\vv} \in \V\backslash\A,\label{minmg:c5}\\
    &\; x_{\vv} =1, &\forall {\vv} \in \A\cup \B,\label{minmg:c6}\\
    &\; x_{\vv} \in \{0,1\}, &\forall {\vv} \in \V, \label{minmg:c7}\\
    &\; y_{{\vv}{\ww}} \in \{0,1\}, &\forall {\vv}, {\ww} \in \V.\label{minmg:c8}
\end{align}
where $\V$ is the (finite) set of potential positions for the mediated vertices.
\end{theorem}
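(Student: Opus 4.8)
The plan is to show that the integer linear program \eqref{eq:dminmg} is an exact formulation of the discrete minimal mediated graph problem, by verifying that feasible 0--1 vectors $(x,y)$ are in bijection with $\A$-mediated graphs on the lattice $\MM$ whose vertex set lies in $\V = \conv(\A)\cap\MM$ and contains $\B$, in such a way that the objective counts exactly $|V|$. Since the constraint that $\V$ be the full set of lattice points inside $\conv(\A)$ plays a role, I would first record (it follows from Properties \ref{prop:prop}.\ref{prop:chull}) that every vertex of any $G\in\M_\A(\B)$ necessarily lies in $\conv(\A)$, so restricting candidate vertices to $\V$ loses nothing; this justifies taking $\V$ finite and makes the model an ILP rather than a MILP.

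The core of the proof is the correspondence. Given a feasible $(x,y)$, set $V:=\{{\vv}\in\V: x_{\vv}=1\}$ and $A:=\{({\vv},{\ww})\in V\times V: y_{{\vv}{\ww}}=1\}$. I would check, in order: (i) $\A\cup\B\subseteq V$ and outdegree $0$ is imposed on vertices of $\A$ (in the normalized form of Remark \ref{rem:1}, this is forced because \eqref{minmg:c5} only ranges over ${\vv}\in\V\backslash\A$, so no outgoing arc from $\a\in\A$ is required — here one should note that $y_{\a{\ww}}$ for $\a\in\A$ can be fixed to $0$ w.l.o.g.\ or simply dropped, as in Remark \ref{rem:1}); (ii) by \eqref{minmg:c1}--\eqref{minmg:c2}, an arc can only join two active vertices, so $A\subseteq V\times V$; (iii) by \eqref{minmg:c5}, every ${\vv}\in V\backslash\A$ has outdegree exactly $2$; (iv) the mediated condition: if ${\ww}\in\delta^+({\vv})$, i.e.\ $y_{{\vv}{\ww}}=1$, then \eqref{minmg:c4} forces $2{\vv}-{\ww}\in\V$ (otherwise $y_{{\vv}{\ww}}=0$), and then \eqref{minmg:c3} gives $y_{{\vv}(2{\vv}-{\ww})}=1$, hence $2{\vv}-{\ww}\in\delta^+({\vv})$; since $\delta^+({\vv})$ has size $2$ and contains ${\ww}$ and its reflection, it equals $\{{\ww},2{\vv}-{\ww}\}$, which is precisely Definition \ref{def:mg}. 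Conversely, given any $G=(V,A)\in\M_\A(\B)$ in normalized form with $V\subseteq\V$, setting $x_{\vv}=[{\vv}\in V]$, $y_{{\vv}{\ww}}=[({\vv},{\ww})\in A]$ satisfies all constraints by the same reading run backwards; and every vertex of every such graph lies in $\V$ by Properties \ref{prop:prop}.\ref{prop:chull}, so nothing is lost. Finally, the objective $|\A|+|\B|+\sum_{{\vv}\in\V\backslash(\A\cup\B)}x_{\vv}$ equals $|\A\cup\B|+|V\backslash(\A\cup\B)|=|V|$ using \eqref{minmg:c6}; note $\A$ and $\B$ may overlap, so one should write the constant as $|\A\cup\B|$ rather than $|\A|+|\B|$, or assume $\B\cap\A=\emptyset$ as is natural. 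Minimizing therefore selects a graph in $\M_\A(\B)$ with the fewest vertices, i.e.\ an element of $\MinMG_\A(\B)$; and by Corollary \ref{coro:ub} such a graph exists (so the ILP is feasible) whenever $\M_\A(\B)\neq\emptyset$.

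The main obstacle I anticipate is not the combinatorial bookkeeping above but the handling of the reflection constraint \eqref{minmg:c3}--\eqref{minmg:c4} together with the outdegree equality \eqref{minmg:c5}: one must be careful that these three families jointly force $\delta^+({\vv})$ to be an unordered pair of mutually reflected points rather than, say, two arcs ${\vv}\to{\ww}$ and ${\vv}\to 2{\vv}-{\ww}$ plus a spurious third. The outdegree-$2$ equality rules out more than two, and \eqref{minmg:c3} guarantees the two are closed under reflection through ${\vv}$; the only subtle point is the degenerate case ${\ww}={\vv}$, excluded by the hypothesis ${\vv}\neq{\ww}$ in \eqref{minmg:c3}--\eqref{minmg:c4} and by the fact that a loop would violate the pairing — I would remark that $y_{{\vv}\vv}=0$ may be assumed. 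A secondary point worth stating explicitly is that $\V$ must be large enough to contain some mediated graph for $\B$; taking $\V=\conv(\A)\cap\Z^d$ (resp.\ $\cap\,(2\Z)^d$) suffices by Properties \ref{prop:prop}.\ref{prop:chull}, since no mediated graph can use vertices outside $\conv(\A)$, so the finite search space $\V$ is simultaneously necessary and sufficient. With these remarks in place, the proof reduces to the routine verification sketched above, and I would present it as a short paragraph establishing the two directions of the correspondence and the objective count.
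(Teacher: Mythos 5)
Your proposal is correct and follows essentially the same route as the paper's proof: a constraint-by-constraint verification that feasible $(x,y)$ vectors correspond exactly to normalized $\A$-mediated graphs containing $\B$ with vertex set in $\V$, with the objective counting $|V|$. Your additional remarks (that the constant should read $|\A\cup\B|$ when $\A$ and $\B$ overlap, and that feasibility of the ILP rests on $\M_\A(\B)\neq\emptyset$ via Corollary \ref{coro:ub}) are sound refinements of details the paper leaves implicit, but they do not change the argument.
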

\begin{proof}
Defining the binary variables that completely identify a graph in $\M_\A(\B)$:
        $$
        x_{\vv} = \begin{cases}
            1 & \mbox{if ${\vv}$ is a mediated vertex in $\MinMG_\A(\B)$,}\\
            0 & \mbox{otherwise}
        \end{cases}
$$
        $$
y_{{\vv}{\ww}} = \begin{cases}
1 & \mbox{if $({\vv},{\ww})$ is an arc in $\MinMG_\A(\B)$,}\\
            0 & \mbox{otherwise}
        \end{cases}
        $$
These variables are adequately defined by contraints \eqref{minmg:c1}-\eqref{minmg:c8}. Constraints \eqref{minmg:c1} and \eqref{minmg:c2} assurre that no arcs are possible unless the extreme vertices are part of the graph. Constraints \eqref{minmg:c3} and \eqref{minmg:c4} imply the verification of the \emph{mediated} condition: if $({\vv},{\ww})$ is an arc in the mediated graph, then, $({\vv},2{\vv}-{\ww})$ is also an arc in case all the extreme are feasible vertices. Contraint \eqref{minmg:c4} states that all the mediated vertices (except those in $\A$) must have exactly two ongoing arcs. Constraints \eqref{minmg:c6}. The domain of the variables are given in Constraints \eqref{minmg:c7} and \eqref{minmg:c8}. The minimality is assured by the minimization criterion on the number of mediated vertices \eqref{minmg:obj}. Note that the elements in $\A$ are excluded from the sum of the variables since they are allways in the mediated graph, and then, its size is incorporated to the objective function as a constant (that can be avoided when solving the model).
\end{proof}
\begin{remark}
    The ILP model in the previous result has $O(|\V|^2)$ variables and $O(|\V|^2)$ linear constraints. For large sizes of $\V$ the problem can be computationally costly. Some strategies can be applied to the model in order to facilitate the solution procedure:
    \begin{itemize}
        \item Constraints \eqref{minmg:c1} are not necessary. They are already induced by Constraints \eqref{minmg:c5}. Note that in case $x_{\vv}=0$, all the outgoing arcs from ${\vv}$ are not allowed in the solution.
        \item Contraints \eqref{minmg:c2} can be strenthened by aggregating all the constraint for a given ${\ww}$, i.e., one can replace Constraints \eqref{minmg:c2} by
        \begin{align*}
            \sum_{{\vv} \in \V} y_{{\vv}{\ww}} \leq |\V| x_{\ww}, \forall {\ww} \in \V.
        \end{align*}
        \item The $y$-variables are not required to be defined for the case $y_{{\vv}{\vv}}$ for ${\vv}\in \V$. Note that loops are not allowed in a mediated graph. The same applies for the $x_{\vv}$-variables for $v\in \A \cup \B$, that are already fixed to one in our model
    \end{itemize}
\end{remark}
\begin{remark}\label{remark:allmin}
    Note that the ILP presented above allows to compute a single minimal mediated graph in the discrete domain case. It might be possible that more than one mediated graph achieves this minimum cardinality. One could obtain the whole list of mediated graphs in $\MinMG_\A(\B)$ by iteratively solving the problem and adding constraints to \textit{filter} the previously obtained mediated graphs until the optimal solution achieve an objective value strictly larger than the minimum cardinal. Specifically, if solving for the first time the problem we obtain a minimum cardinality of $\nu$, with sets of arcs (provided by the solutions in the $y$-variables) $A_{(1)}$, one can add the constraints:
\begin{align*}
    |\A| + |\B|  + \sum_{{\vv}\in \V\backslash(\A\cup \B)} x_{\vv} \leq \nu,\\
    \dsum_{{\vv}, {\ww} \in \V} y_{{\vv}{\ww}} \leq |A_{(1)}|-1
\end{align*}
and solve the problem again. Thus, in the $k$th iteration, when the set of arcs $A_{(k)}$ is obtained, we would add the constraint:
\begin{align*}
    \dsum_{{\vv}, {\ww} \in \V} y_{{\vv}{\ww}} \leq |A_{(k)}|-1
\end{align*}
In case the problem is infeasible, then the list of elements in $\MinMG_\A(\B)$ is already obtained, otherwise, a new constraint is incorporated and the problem is solved again.
\end{remark}

\begin{example}\label{ex:listMinMG}
    Let us consider the set $\A=\{(0,0), (7,0), (0,7)\}$ and $\B=\{(1,1)\}$. In Figure \ref{fig:lisMinMG} we show the five elements in $\MinMG_\A(\B)$ in case $\MM=\Z^2$. They where obtained in the same order as they as plotted by cutting off the solutions previously obtained. All of them have $10$ vertices. Observe that although the first and second, and the third and four mediated graphs are symmetric with respect to the line $\{x_1=x_2\}$, the three obtained \emph{proper} mediated graphs have a very different combinatorial structure.
    \begin{figure}[h]
\begin{tikzpicture}[scale=0.34]
\draw [step=1.0, gray, very thin] (0,0) grid (7,7);
\node (A1) at (0,7) [circle, draw, red, inner sep=0.8, fill=red] {};
\node (A2) at (2,4) [circle, draw, blue, inner sep=0.8, fill=blue] {};
\node (A3) at (4,0) [circle, draw, blue, inner sep=0.8, fill=blue] {};
\node (A4) at (1,2) [circle, draw, blue, inner sep=0.8, fill=blue] {};
\node (A5) at (0,0) [circle, draw, red, inner sep=0.8, fill=red] {};
\node (A6) at (1,1) [circle, draw, blue, inner sep=0.8, fill=blue] {};
\node (A7) at (7,0) [circle, draw, red, inner sep=0.8, fill=red] {};
\node (A8) at (2,0) [circle, draw, blue, inner sep=0.8, fill=blue] {};
\node (A9) at (1,0) [circle, draw, blue, inner sep=0.8, fill=blue] {};
\node (A10) at (4,1) [circle, draw, blue, inner sep=0.8, fill=blue] {};
\draw[-{Stealth[scale=0.4]}, line width=0.2] (A4)--(A5);
\draw[-{Stealth[scale=0.4]}, line width=0.2] (A10)--(A7);
\draw[-{Stealth[scale=0.4]}, line width=0.2] (A9)--(A8);
\draw[-{Stealth[scale=0.4]}, line width=0.2] (A8)--(A5);
\draw[-{Stealth[scale=0.4]}, line width=0.2] (A6)--(A9);
\draw[-{Stealth[scale=0.4]}, line width=0.2] (A6)--(A4);
\draw[-{Stealth[scale=0.4]}, line width=0.2] (A2)--(A1);
\draw[-{Stealth[scale=0.4]}, line width=0.2] (A3)--(A9);
\draw[-{Stealth[scale=0.4]}, line width=0.2] (A8)--(A3);
\draw[-{Stealth[scale=0.4]}, line width=0.2] (A9)--(A5);
\draw[-{Stealth[scale=0.4]}, line width=0.2] (A3)--(A7);
\draw[-{Stealth[scale=0.4]}, line width=0.2] (A2)--(A10);
\draw[-{Stealth[scale=0.4]}, line width=0.2] (A4)--(A2);
\draw[-{Stealth[scale=0.4]}, line width=0.2] (A10)--(A4);
\end{tikzpicture}~\begin{tikzpicture}[scale=0.34]
\draw [step=1.0, gray, very thin] (0,0) grid (7,7);
\node (A1) at (0,1) [circle, draw, blue, inner sep=0.8, fill=blue] {};
\node (A2) at (0,7) [circle, draw, red, inner sep=0.8, fill=red] {};
\node (A3) at (0,4) [circle, draw, blue, inner sep=0.8, fill=blue] {};
\node (A4) at (2,1) [circle, draw, blue, inner sep=0.8, fill=blue] {};
\node (A5) at (0,0) [circle, draw, red, inner sep=0.8, fill=red] {};
\node (A6) at (1,1) [circle, draw, blue, inner sep=0.8, fill=blue] {};
\node (A7) at (7,0) [circle, draw, red, inner sep=0.8, fill=red] {};
\node (A8) at (4,2) [circle, draw, blue, inner sep=0.8, fill=blue] {};
\node (A9) at (1,4) [circle, draw, blue, inner sep=0.8, fill=blue] {};
\node (A10) at (0,2) [circle, draw, blue, inner sep=0.8, fill=blue] {};
\draw[-{Stealth[scale=0.4]}, line width=0.2] (A6)--(A4);
\draw[-{Stealth[scale=0.4]}, line width=0.2] (A10)--(A5);
\draw[-{Stealth[scale=0.4]}, line width=0.2] (A6)--(A1);
\draw[-{Stealth[scale=0.4]}, line width=0.2] (A3)--(A1);
\draw[-{Stealth[scale=0.4]}, line width=0.2] (A1)--(A5);
\draw[-{Stealth[scale=0.4]}, line width=0.2] (A9)--(A2);
\draw[-{Stealth[scale=0.4]}, line width=0.2] (A4)--(A8);
\draw[-{Stealth[scale=0.4]}, line width=0.2] (A4)--(A5);
\draw[-{Stealth[scale=0.4]}, line width=0.2] (A10)--(A3);
\draw[-{Stealth[scale=0.4]}, line width=0.2] (A1)--(A10);
\draw[-{Stealth[scale=0.4]}, line width=0.2] (A8)--(A9);
\draw[-{Stealth[scale=0.4]}, line width=0.2] (A8)--(A7);
\draw[-{Stealth[scale=0.4]}, line width=0.2] (A3)--(A2);
\draw[-{Stealth[scale=0.4]}, line width=0.2] (A9)--(A4);
\end{tikzpicture}~\begin{tikzpicture}[scale=0.34]
\draw [step=1.0, gray, very thin] (0,0) grid (7,7);
\node (A1) at (0,7) [circle, draw, red, inner sep=0.8, fill=red] {};
\node (A2) at (2,1) [circle, draw, blue, inner sep=0.8, fill=blue] {};
\node (A3) at (0,0) [circle, draw, red, inner sep=0.8, fill=red] {};
\node (A4) at (1,1) [circle, draw, blue, inner sep=0.8, fill=blue] {};
\node (A5) at (7,0) [circle, draw, red, inner sep=0.8, fill=red] {};
\node (A6) at (4,2) [circle, draw, blue, inner sep=0.8, fill=blue] {};
\node (A7) at (1,4) [circle, draw, blue, inner sep=0.8, fill=blue] {};
\node (A8) at (2,3) [circle, draw, blue, inner sep=0.8, fill=blue] {};
\node (A9) at (2,2) [circle, draw, blue, inner sep=0.8, fill=blue] {};
\node (A10) at (3,2) [circle, draw, blue, inner sep=0.8, fill=blue] {};
\draw[-{Stealth[scale=0.4]}, line width=0.2] (A10)--(A6);
\draw[-{Stealth[scale=0.4]}, line width=0.2] (A8)--(A10);
\draw[-{Stealth[scale=0.4]}, line width=0.2] (A8)--(A7);
\draw[-{Stealth[scale=0.4]}, line width=0.2] (A7)--(A1);
\draw[-{Stealth[scale=0.4]}, line width=0.2] (A2)--(A6);
\draw[-{Stealth[scale=0.4]}, line width=0.2] (A2)--(A3);
\draw[-{Stealth[scale=0.4]}, line width=0.2] (A4)--(A9);
\draw[-{Stealth[scale=0.4]}, line width=0.2] (A7)--(A2);
\draw[-{Stealth[scale=0.4]}, line width=0.2] (A6)--(A7);
\draw[-{Stealth[scale=0.4]}, line width=0.2] (A6)--(A5);
\draw[-{Stealth[scale=0.4]}, line width=0.2] (A9)--(A8);
\draw[-{Stealth[scale=0.4]}, line width=0.2] (A4)--(A3);
\draw[-{Stealth[scale=0.4]}, line width=0.2] (A9)--(A2);
\draw[-{Stealth[scale=0.4]}, line width=0.2] (A10)--(A9);
\end{tikzpicture}~\begin{tikzpicture}[scale=0.34]
\draw [step=1.0, gray, very thin] (0,0) grid (7,7);
\node (A1) at (0,7) [circle, draw, red, inner sep=0.8, fill=red] {};
\node (A2) at (2,4) [circle, draw, blue, inner sep=0.8, fill=blue] {};
\node (A3) at (1,2) [circle, draw, blue, inner sep=0.8, fill=blue] {};
\node (A4) at (0,0) [circle, draw, red, inner sep=0.8, fill=red] {};
\node (A5) at (1,1) [circle, draw, blue, inner sep=0.8, fill=blue] {};
\node (A6) at (7,0) [circle, draw, red, inner sep=0.8, fill=red] {};
\node (A7) at (2,3) [circle, draw, blue, inner sep=0.8, fill=blue] {};
\node (A8) at (2,2) [circle, draw, blue, inner sep=0.8, fill=blue] {};
\node (A9) at (3,2) [circle, draw, blue, inner sep=0.8, fill=blue] {};
\node (A10) at (4,1) [circle, draw, blue, inner sep=0.8, fill=blue] {};
\draw[-{Stealth[scale=0.4]}, line width=0.2] (A3)--(A4);
\draw[-{Stealth[scale=0.4]}, line width=0.2] (A10)--(A6);
\draw[-{Stealth[scale=0.4]}, line width=0.2] (A9)--(A7);
\draw[-{Stealth[scale=0.4]}, line width=0.2] (A9)--(A10);
\draw[-{Stealth[scale=0.4]}, line width=0.2] (A5)--(A8);
\draw[-{Stealth[scale=0.4]}, line width=0.2] (A2)--(A1);
\draw[-{Stealth[scale=0.4]}, line width=0.2] (A7)--(A2);
\draw[-{Stealth[scale=0.4]}, line width=0.2] (A8)--(A3);
\draw[-{Stealth[scale=0.4]}, line width=0.2] (A7)--(A8);
\draw[-{Stealth[scale=0.4]}, line width=0.2] (A5)--(A4);
\draw[-{Stealth[scale=0.4]}, line width=0.2] (A3)--(A2);
\draw[-{Stealth[scale=0.4]}, line width=0.2] (A2)--(A10);
\draw[-{Stealth[scale=0.4]}, line width=0.2] (A8)--(A9);
\draw[-{Stealth[scale=0.4]}, line width=0.2] (A10)--(A3);
\end{tikzpicture}~\begin{tikzpicture}[scale=0.34]
\draw [step=1.0, gray, very thin] (0,0) grid (7,7);
\node (A1) at (0,1) [circle, draw, blue, inner sep=0.8, fill=blue] {};
\node (A2) at (0,7) [circle, draw, red, inner sep=0.8, fill=red] {};
\node (A3) at (4,0) [circle, draw, blue, inner sep=0.8, fill=blue] {};
\node (A4) at (0,4) [circle, draw, blue, inner sep=0.8, fill=blue] {};
\node (A5) at (0,0) [circle, draw, red, inner sep=0.8, fill=red] {};
\node (A6) at (1,1) [circle, draw, blue, inner sep=0.8, fill=blue] {};
\node (A7) at (7,0) [circle, draw, red, inner sep=0.8, fill=red] {};
\node (A8) at (2,0) [circle, draw, blue, inner sep=0.8, fill=blue] {};
\node (A9) at (0,2) [circle, draw, blue, inner sep=0.8, fill=blue] {};
\node (A10) at (1,0) [circle, draw, blue, inner sep=0.8, fill=blue] {};
\draw[-{Stealth[scale=0.4]}, line width=0.2] (A8)--(A5);
\draw[-{Stealth[scale=0.4]}, line width=0.2] (A9)--(A5);
\draw[-{Stealth[scale=0.4]}, line width=0.2] (A10)--(A8);
\draw[-{Stealth[scale=0.4]}, line width=0.2] (A4)--(A1);
\draw[-{Stealth[scale=0.4]}, line width=0.2] (A1)--(A5);
\draw[-{Stealth[scale=0.4]}, line width=0.2] (A6)--(A8);
\draw[-{Stealth[scale=0.4]}, line width=0.2] (A9)--(A4);
\draw[-{Stealth[scale=0.4]}, line width=0.2] (A3)--(A10);
\draw[-{Stealth[scale=0.4]}, line width=0.2] (A1)--(A9);
\draw[-{Stealth[scale=0.4]}, line width=0.2] (A4)--(A2);
\draw[-{Stealth[scale=0.4]}, line width=0.2] (A10)--(A5);
\draw[-{Stealth[scale=0.4]}, line width=0.2] (A3)--(A7);
\draw[-{Stealth[scale=0.4]}, line width=0.2] (A8)--(A3);
\draw[-{Stealth[scale=0.4]}, line width=0.2] (A6)--(A9);
\end{tikzpicture}
\caption{The entire set $\MinMG_\A(\B)$ of Example \ref{ex:listMinMG}.\label{fig:lisMinMG}}
\end{figure}
\end{example}
\medskip

\noindent{\bf SOS decomposition of SONC polynomials}. 

Let $\R[\x]=\R[x_1,\ldots,x_d]$ denote the ring of real $d$-variate polynomials. For $\ba=(\alpha_1,\ldots, \alpha_d)\in \Z_+^d$ , $\x^{\ba}:=x_1^{\alpha_1}\cdots x_d^{\alpha_d}$. Assume that $\A\subset (2\Z)^{d}$ is an affine independent set. A polynomial $f=\dsum_{\ba\in \A}c_{\ba}\x^{\ba}+c\x^{\be}\in \R[\x]$ is called a \emph{circuit polynomial} (\emph{circuit}, for short) if $c_{\ba}>0$, $c\neq 0$, and $\be\in\conv(\A)^\circ$~\citep{iliman2016amoebas}. Let $\lambda_{\ba}$ be the barycentric coordinates of $\be$ with respect to $\A$. The \emph{circuit number} $\Theta_f$ is defined as
\begin{equation}\label{eq:circuitnumber}
    \Theta_f=\dprod_{\ba\in\A}\left(\frac{c_{\ba}}{\lambda_{\ba}}\right)^{\lambda_{\ba}}.
\end{equation}
The nonnegativity of the circuit polynomial $f$ can be checked by its circuit number: $f$ is nonnegative if and only if either $|c|\leq \Theta_f$ and $\be\notin (2\Z)^d$, or $c\geq -\Theta_f$ and $\be\in (2\Z)^d$. 

Let $\B\subset \Z^d\cap \conv(\A)^\circ$ be a finite set, a polynomial $f\in \R[\x]$ is a sum of nonnegative circuits (SONC) polynomial if $f=\dsum_{\be\in \B}f_{\be}$ where $f_{\be}$ nonnegative circuit polynomial supported on $\A\cup\{\be\}$. Given a mediated graph $G=(V,A)\in \M_{\A}(\B)$, the construction of a sum of squares (SOS) decomposition -in fact, Sum of Binomial Squares (SOBS)- is based on mediated graphs in the intersection of $\M_{\A}(\be)$ and the set of submediated graphs of $G$ for each $\be\in \B$. In this shape each range of mediated graphs $G_{\be}=(V_{\be},A_{\be})\in \M_{\A}(\be)$ that is subgraph of $G$ for $\be\in \B$ defines a SOS decomposition of $f=\dsum_{\be\in \B}f_{\be}$ as $f=\dsum_{\be\in \B}\sigma_{\be}$ where $f_{\be}=\sigma_{\be}$ is a SOS decomposition of $f_{\be}$ whose number of squares is $|V_{\be}|-(d+1)$.

However, the authors of this paper realize this claim on the number of squares just follows directly from \citep[Theorem 5.2]{iliman2016amoebas} in the extremal case, when the coefficient of $\be$ in $f_{\be}$ is $\pm\Theta_{f_{\be}}$, because of the inner case is derived from a convexity argument from the extremal cases. Whereas the theoretical result of the characterization of the intersection of SONC and SOS by means of the combinatorial structure of the Newton polytope is perfect, the authors of this paper consider it necessary to demonstrate an explicit SOS decomposition of SONC polynomials that use the original support and coefficients of the polynomial.

First, we define the global minimizer of a nonnegative circuit $\s^*_f\in \R^d$ as the unique vector satisfying
\begin{equation}
\dprod_{k=1}^d\left(e^{s^*_{fk}}\right)^{\alpha(j))_k-\alpha(0)_k}=e^{\langle \s^*_f,\overrightarrow{\ba(0)\ba(j)}\rangle}=\frac{\lambda_jc_0}{\lambda_0c_j}
\end{equation}
for all $j=1,\ldots,d$, where $\A=\{\ba(0),\ldots,\ba(d)\}$.

\begin{theorem}\label{the:sobs}
    Let $\A=\{\ba(0),\ldots,\ba(d)\}$, and $f(\x)=\dsum_{j=0}^dc_j\x^{\ba(j)}+c\x^{\be}$ be a nonnegative circuit polynomial and $G=(V,A)\in \M_{\A}(\be)$ be a mediated graph. Then,
    \begin{enumerate}
        \item if $\be\notin (2\Z)^d$,
    \begin{align*}
        f(\x)&=c_{\be\be}\frac{c_0}{\lambda_0}e^{\langle \s^*_f,\ba(0)\rangle}\dsum_{m=0}^1 \frac{\Theta_{f}+(-1)^mc}{2\Theta_{f}}\left(\frac{\x^{\frac{\be'}{2}}}{\sqrt{e^{\langle \s^*_f,\be'\rangle}}}+ (-1)^m\frac{\x^{\frac{\be''}{2}}}{\sqrt{e^{\langle \s^*_f,\be''\rangle}}}\right)^2\\
        & + \frac{c_0}{\lambda_0}e^{\langle \s^*_f,\ba(0)\rangle}\dsum_{\ga\in V\setminus (\A\cup \{\be\})} c_{\be\ga}\left(\frac{\x^{\frac{\ga'}{2}}}{\sqrt{e^{\langle \s^*_f,\ga'\rangle}}}-\frac{\x^{\frac{\ga''}{2}}}{\sqrt{e^{\langle \s^*_f,\ga''\rangle}}}\right)^2;
    \end{align*}

    \item if $\be\in (2\Z)^d$ and $c <0$,
\begin{align*}
        f(\x)&=(\Theta_{f}+c)\left(\x^{\frac{\be}{2}}\right)^2\\
        &+ \frac{c_0}{\lambda_0}e^{\langle \s^*_f,\ba(0)\rangle}\dsum_{\ga\in V\setminus \A} c_{\be\ga}\left(\frac{\x^{\frac{\ga'}{2}}}{\sqrt{e^{\langle \s^*_f,\ga'\rangle}}}-\frac{\x^{\frac{\ga''}{2}}}{\sqrt{e^{\langle \s^*_f,\ga''\rangle}}}\right)^2;
    \end{align*}
    \end{enumerate}
        
where $\ga',\ga''\in \delta^+(\ga)$, $c_{\be\ga}\geq0$ is the entry $(\be,\ga)$ in $(L^+(G)_{V\backslash \A})^{-1}$ for every $\ga\in V\backslash \A$.
\end{theorem}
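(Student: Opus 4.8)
The plan is to verify the two claimed identities by direct expansion, using the Laplacian inverse from Properties~\ref{prop:prop}.\ref{prop:laplacian} to control the coefficients of the binomial squares, and the defining equations of the global minimizer $\s^*_f$ to turn the circuit number $\Theta_f$ into the right combination of monomials. The key structural fact is that for $\ga \in V\backslash\A$ with children $\ga',\ga'' \in \delta^+(\ga)$ we have $\ga = \frac{1}{2}(\ga'+\ga'')$, so the squared binomial $\bigl(\x^{\ga'/2}/\sqrt{e^{\langle\s^*_f,\ga'\rangle}} - \x^{\ga''/2}/\sqrt{e^{\langle\s^*_f,\ga''\rangle}}\bigr)^2$ expands to $\x^{\ga'}/e^{\langle\s^*_f,\ga'\rangle} + \x^{\ga''}/e^{\langle\s^*_f,\ga''\rangle} - 2\x^{\ga}/e^{\langle\s^*_f,\ga\rangle}$, where I used $\langle\s^*_f,\ga'\rangle + \langle\s^*_f,\ga''\rangle = 2\langle\s^*_f,\ga\rangle$. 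So each square contributes the coefficient pattern encoded by a row of $L^+(G)$: $+1$ on the two children, $-2$ on the parent. Summing $c_{\be\ga}$ times these patterns over $\ga \in V\backslash\A$ produces, by definition of the Laplacian, a monomial combination whose coefficient vector (restricted to $V\backslash\A$) is exactly $L^+(G)_{V\backslash\A}^{\mathsf T}\,(c_{\be\ga})_{\ga}$; choosing $(c_{\be\ga})$ to be the $\be$-row of $(L^+(G)_{V\backslash\A})^{-1}$ collapses this to the single indicator of $\be$ (up to sign), while on $\A$ one picks up a nonnegative combination of the $\x^{\ba(j)}$.

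The second step is to reconcile the weighting prefactor $\frac{c_0}{\lambda_0}e^{\langle\s^*_f,\ba(0)\rangle}$ with the actual coefficients $c_j$ of $f$. From the global-minimizer equations $e^{\langle\s^*_f,\overrightarrow{\ba(0)\ba(j)}\rangle} = \frac{\lambda_j c_0}{\lambda_0 c_j}$ one gets $\frac{c_0}{\lambda_0}e^{\langle\s^*_f,\ba(0)\rangle}\cdot \frac{1}{e^{\langle\s^*_f,\ba(j)\rangle}} = \frac{c_j}{\lambda_j}$, i.e. the prefactor times $e^{-\langle\s^*_f,\ba(j)\rangle}$ recovers $c_j/\lambda_j$. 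Since the net coefficient of $\x^{\ba(j)}$ coming out of the telescoping sum is (the $(\be,j)$-column of the relevant matrix, which by the structure of $L^+(G)$ equals) $\lambda_j$ — here one invokes that the barycentric identity $\be = \sum_j \lambda_j \ba(j)$ forces the boundary contributions to be precisely the barycentric weights, exactly as in \cite[Lemma 4.3 and the surrounding discussion]{reznick1989forms} — the $\x^{\ba(j)}$ terms assemble to $\sum_j c_j\x^{\ba(j)}$. The remaining work is the $\be$-monomial: in case (1), $\be\notin(2\Z)^d$, one splits the $\be$-square into the two sign-combinations indexed by $m=0,1$, uses $\frac{\Theta_f+c}{2\Theta_f} + \frac{\Theta_f-c}{2\Theta_f} = 1$ to recover the full coefficient $\frac{c_0}{\lambda_0}e^{\langle\s^*_f,\ba(0)\rangle}c_{\be\be}\cdot(\text{something}) $ on the cross-free part and $\frac{\Theta_f+c}{2\Theta_f} - \frac{\Theta_f-c}{2\Theta_f} = \frac{c}{\Theta_f}$ on the cross term $2\x^{\be/2+\be''/2}$-type contribution, which after identifying $\frac{c_0}{\lambda_0}e^{\langle\s^*_f,\ba(0)\rangle}c_{\be\be}\Theta_f^{-1} \cdot \Theta_f = $ the coefficient $c$ gives $c\x^{\be}$; in case (2), $\be\in(2\Z)^d$ with $c<0$, the square $(\x^{\be/2})^2 = \x^{\be}$ is a genuine monomial, $(\Theta_f+c)\ge 0$ by the nonnegativity criterion, and it plays the role of the would-be negative part, with the telescoping sum supplying $+\Theta_f\x^{\be}$ so that the total is $(\Theta_f+c)\x^{\be} - \Theta_f\x^{\be} + \dots = c\x^{\be}+\dots$. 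Nonnegativity of all displayed coefficients follows from $c_{\be\ga}\ge 0$ (Properties~\ref{prop:prop}.\ref{prop:laplacian}), from $c_{\ba}>0$, and from the circuit-number nonnegativity test ($|c|\le\Theta_f$, resp. $c\ge-\Theta_f$).

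The main obstacle I anticipate is bookkeeping the constant $c_{\be\be}$ and verifying that $\frac{c_0}{\lambda_0}e^{\langle\s^*_f,\ba(0)\rangle}c_{\be\be} = \Theta_f \cdot(\text{appropriate power of }e)$ so that the $m$-sum in case (1) really produces coefficient $c$ on $\x^\be$; this requires showing that the diagonal-ish entry $c_{\be\be}$ of $(L^+(G)_{V\backslash\A})^{-1}$, together with the prefactor, reconstructs the circuit number $\Theta_f = \prod_j (c_j/\lambda_j)^{\lambda_j}$. I would establish this by a logarithmic computation: taking logs of $\Theta_f$, substituting $\log(c_j/\lambda_j) = \log(c_0/\lambda_0) + \langle\s^*_f, \ba(0)-\ba(j)\rangle$ from the minimizer equations, and using $\sum_j\lambda_j = 1$ and $\sum_j\lambda_j\ba(j) = \be$, which yields $\log\Theta_f = \log(c_0/\lambda_0) + \langle\s^*_f,\ba(0)\rangle - \langle\s^*_f,\be\rangle$, i.e. $\Theta_f = \frac{c_0}{\lambda_0}e^{\langle\s^*_f,\ba(0)\rangle}e^{-\langle\s^*_f,\be\rangle}$; matching this against the prefactor forces $c_{\be\be} = e^{\langle\s^*_f,\be\rangle}$ up to the exponents carried by the $\be',\be''$ normalizations, and one checks this is consistent with the $\be$-row of the Laplacian inverse having the value dictated by the same barycentric/telescoping identity. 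Once this single normalization is pinned down, both identities reduce to the termwise coefficient comparison already sketched, and the rest is the routine (if lengthy) algebra of collecting like monomials.
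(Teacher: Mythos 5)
Your proposal is correct and follows essentially the same route as the paper's proof: split $f$ into the convex combination $\sum_{m}\frac{\Theta_f+(-1)^m c}{2\Theta_f}f^*_m$ (resp.\ peel off $(\Theta_f+c)\x^{\be/2\cdot 2}$ in case (2)), rescale by the global minimizer via $\x\mapsto e^{\s^*_f}\circ\x$ so that each extremal circuit becomes a positive multiple of a simplicial agiform, and invoke Reznick's Laplacian-inverse decomposition of agiforms — your telescoping expansion of the binomial squares is exactly the content of that cited decomposition, and your identity $\Theta_f=\frac{c_0}{\lambda_0}e^{\langle\s^*_f,\ba(0)\rangle-\langle\s^*_f,\be\rangle}$ is the correct normalization (the anticipated issue about $c_{\be\be}$ dissolves once one notes $c_{\be\be}$ is purely combinatorial and the $\x^{\be}$-coefficient bookkeeping is already guaranteed by $L^+(L^+)^{-1}=I$).
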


For the sake of the reader the proof of Theorem \ref{the:sobs} might be found in the Appendix \ref{appendix}.

In both cases, the number of squares is $|V|-d$. Then, let $\A\subset (2\Z)^d$ be an affine independent set, $\B\subset \Z^d\cap \conv(\A)^\circ$ be a finite set, and let $f\in\R[\x]$ be a SONC supported on $\A\cup \B$. Given a mediated graph $G=(V,A)\in \M_{\A}(\B)$, the construction  and a range of mediated graphs $G_{\be}=(V_{\be},A_{\be})\in \M_{\A}(\be)$ that are subgraphs of $G$ for $\be\in \B$ then $f=\dsum_{\be\in \B}f_{\be}$ where $f_{\be}$ nonnegative circuit polynomial supported on $\A\cup\{\be\}$ can be written as $f=\dsum_{\be\in \B}\sigma_{\be}$ where $f_{\be}=\sigma_{\be}$ is the SOS decomposition of Theorem \ref{the:sobs}. Notice that the involved binomials can be $\left(s_{\ga} \x^{\frac{\ga'}{2}}-t_{\ga}\x^{\frac{\ga''}{2}}\right)$ and $\left(s_{\be} \x^{\frac{\be'}{2}}+t_{\be}\x^{\frac{\be'}{2}}\right)$ for $\ga\in V\backslash \A$, $\ga',\ga''\in\delta^+(\ga)$, $s_{\ga}, t_{\ga}\in \R_+$ if $\be\notin (2\Z)^d$; and $\left(\x^{\frac{\be}{2}}\right)$ if $\be\in (2\Z)^d$ so the number of squares of the decomposition is less or equal to $|V|+|\B|-d-1$, being equality if the coefficient of $\be$ in $f_{\be}$ is different from $\pm \Phi_{f_{\be}}$ for every $\be\in \B$. The number of squares decreases in one unit for each $\be\in \B$ that satisfies the equality. Hence, let $(\x^*_f,\y^*_f)$ be the solution of \eqref{eq:dminmg} for $\A$ and $\B$, defining $V^*_f:=\{\ga \in \Z^{d} : x^*_{f\ga}=1\}$ and $A^*_f:=\{(\ga, \ga')\in \Z^{d}\times (2\Z)^{d} : y^*_{f\ga,\ga'}=1\}$, then by Theorem \ref{th:discrete}, the mediated graph $G^*_f:=(V^*_{f},A^*_{f})\in \MinMG_\A(\B)$. Thus, we can apply Theorem \ref{the:sobs} to each $f_{\be}$ with $G^*_f$ providing a SOS decomposition of $f$ which reduces the number of squares and the support of the decomposition, expanding the sparsity and reducing the rank of the associated positive semidefinite gram matrix of $f$.

\subsection{Maximal Mediated Graphs}\label{sec:maximal}

In what follows we analyze maximal mediated graphs, i.e., those mediated graphs for $\A$ that have maximum vertices cardinality. Note that in the \emph{continuous} case, the set of potential vertices for the mediated graph is not finite. Thus, in this section we focus on maximal mediated graphs for discrete domains. Note that in discrete domains, the number of vertices of any $\A$-mediated graph can be at most $|\conv(\A)\cap \MM|<\infty$ ensuring the existence of maximal elements.

\begin{definition}
Let $\A\subset \MM$ be a finite set. $G =(V,A) \in \M_\A$, is a Maximal $\A$-Mediated Graph, if there not exists $G' = (V',A')\in \M_\A$ such that $|V'|>|V|$.
\end{definition}
We denote by $\MaxMG_\A$ the family of maximal $\A$-mediated graphs. Notice that a maximal mediated set is a maximal element in the poset $(\M_\A,\preceq)$.

The first observation that we address is the unification of notation of maximal mediated sets. The maximal $\A$-mediated set was defined as the $\A$-mediated set that contains every $\A$-mediated set~\citep{hartzer2022initial}, i.e., maximality understood by the order induced by the set inclusion. Indeed, this kind of set are maximum for this order in the set of all $\A$-mediated sets.  In what follows we state the equivalence between our definition of maximal mediated graph and those of maximal mediated sets in the literature.

\begin{theorem}\label{theo:maximal}
    $G=(V,A)\in \M_\A$ is a maximal mediated graph if and only if $V$ is the maximal mediated set. Furthermore, if $G=(V,A)$ and $G'=(V',A')$ are maximal, then $V=V'$.
\end{theorem}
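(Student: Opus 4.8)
The plan is to establish two directions plus the uniqueness addendum, using as the pivot the concrete mediated set $M_\A$ from the literature (the $\A$-mediated set containing every $\A$-mediated set, as in \citep{hartzer2022initial}), together with Remark \ref{rem:1} which lets us assume every proper mediated vertex has outdegree exactly $2$.

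First I would recall that $M_\A$ itself carries the structure of a mediated graph: since every point of $M_\A\setminus\A$ is a midpoint of two points of $M_\A$, we may choose, for each such point, one such pair and direct two arcs to it, obtaining a graph $G_{\max}=(M_\A,A_{\max})\in\M_\A$ with $|M_\A|$ vertices. Then for the forward direction, suppose $G=(V,A)\in\M_\A$ is maximal in our cardinality order. Every vertex of $V$ lies in an $\A$-mediated set (namely $V$ itself is one, by unwinding the definition: each ${\vv}\in V\setminus\A$ has two children ${\ww},2{\vv}-{\ww}\in V$, so ${\vv}=\tfrac12({\ww}+(2{\vv}-{\ww}))$ is their midpoint), hence $V\subseteq M_\A$ because $M_\A$ contains every $\A$-mediated set. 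Thus $|V|\le|M_\A|=|M_{\max}|$, and maximality of $G$ forces $|V|=|M_\A|$; combined with $V\subseteq M_\A$ this yields $V=M_\A$, the maximal mediated set. For the converse, if $V$ is the maximal mediated set then $|V|=|M_\A|\ge|V'|$ for every $(V',A')\in\M_\A$ (again since $V'\subseteq M_\A$), so $G$ is maximal in our order. The uniqueness statement is then immediate: if $G=(V,A)$ and $G'=(V',A')$ are both maximal, the forward direction gives $V=M_\A=V'$.

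The one point that needs care — and the main (minor) obstacle — is the claim that the vertex set of \emph{any} graph in $\M_\A$ is itself an $\A$-mediated set in the sense of \citep{hartzer2022initial}, i.e.\ that it is closed under ``being a midpoint of two of its elements'' in the required way. This follows directly from Definition \ref{def:mg} and the outdegree-$2$ normalization of Remark \ref{rem:1}: for ${\vv}\in V\setminus\A$, picking ${\ww}\in\delta^+({\vv})$ we also have $2{\vv}-{\ww}\in\delta^+({\vv})\subseteq V$, so ${\vv}$ is the midpoint of the pair $\{{\ww},2{\vv}-{\ww}\}\subseteq V$; for ${\vv}\in\A$ nothing is required. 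Hence $V$ satisfies the defining closure property of mediated sets, and the inclusion $V\subseteq M_\A$ is legitimate. I would phrase this as a short lemma-style sentence at the start of the proof so that the literature's ``contains every $\A$-mediated set'' can be invoked cleanly; everything else is bookkeeping with the cardinality partial order $\preceq$.
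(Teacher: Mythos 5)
Your proof is correct, and it reaches the conclusion by a slightly different route than the paper in the forward direction. You take the existence of the set-inclusion maximum mediated set $M_\A$ from \citep{hartzer2022initial} as given, endow it with a mediated graph structure $G_{\max}=(M_\A,A_{\max})\in\M_\A$, and then squeeze: $V\subseteq M_\A$ because $V$ is itself a mediated set, while cardinality-maximality of $G$ forces $|V|=|M_\A|$, hence $V=M_\A$. The paper instead argues by contradiction without presupposing that such an $M_\A$ exists: if some mediated set $V'$ satisfied $V'\nsubseteq V$, then $V\cup V'$ would be a mediated set strictly containing $V$ (closure under unions, Properties \ref{prop:prop}.\ref{prop:union}), supporting a mediated graph $G'\succ G$ and contradicting maximality; this simultaneously shows $V$ contains every mediated set, i.e.\ is the maximal mediated set. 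The trade-off is that the paper's forward direction is self-contained (it re-derives the relevant piece of the existence of $M_\A$ from the union property), whereas yours outsources that existence to the literature, where it is typically proved by the very same union argument. The backward direction and the uniqueness addendum are essentially identical in both treatments. Your preliminary observation that the vertex set of any graph in $\M_\A$ is an $\A$-mediated set (and conversely any mediated set carries a mediated graph structure via the outdegree-$2$ normalization of Remark \ref{rem:1}) is exactly the bridge the paper also relies on implicitly, so stating it explicitly is a genuine improvement in clarity rather than a gap.
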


\begin{proof}
Let $G$ be a maximal mediated graph. If there is a $\A$-mediated set $V'$ such that $V'\nsubseteq V$, then $V\subset V\cup V'$. Since, $V\cup V'$ is a $\A$-mediated set there is a $\A$-mediated graph $G'=(V\cup V', A')$. However, $G'\succ G$ and that is not possible by the maximality of $G$. So, $V$ is the maximal mediated set.

On the other hand, let $V$ be the maximal $\A$-mediated set. Assume there is $G'=(V',A')\succeq G$, that implies either $|V'|>|V|$ or $G'=G$. However, the first condition cannot be true for the maximality of $V$ so $G'=G$. Hence, $G$ is maximal.
\end{proof}

In what follows we provide our mathematical optimization formulation that we propose to compute the maximal mediated graph for a set $\A$.
\begin{theorem}\label{th:max_discrete}
Let $\A \subset \Z^d$ be a finite set of point. The following integer linear programming model allows to compute the mediated graph in $\MaxMG_\A$.
\begin{align}\label{eq:dmaxmg}\tag{D-MaxMGP}
    && \nonumber \\
    \text{Maximize } &\; |\A|  + \sum_{{\vv}\in \V\backslash\A)} x_{\vv} & \label{maxmg:obj}\\
    \text{subject to } & \; \sum_{{\vv}\in \V} y_{{\vv}{\ww}} \leq |\V| x_{\vv}, &\forall {\ww}\in \V,\label{maxmg:c1}\\
    & \; \sum_{{\ww} \in \V}  y_{{\vv}{\ww}} \leq |\V| x_{\vv}, &\forall {\vv} \in \A,\label{maxmg:c2}\\
        & \; y_{{\vv} (2{\vv}-{\ww})} \geq y_{{\vv}{\ww}}, &\forall {\vv}\neq {\ww} \in \V \text{ if } 2{\vv}-{\ww} \in \V,\label{maxmg:c3}\\
            &\; y_{{\vv}{\ww}} =0, &\forall {\vv}\neq {\ww} \in \V \text{ if } 2{\vv}-{\ww} \not\in \V,\label{maxmg:c4}\\
    &\; \sum_{{\ww} \in V} y_{{\vv}{\ww}} = 2x_{\vv}, &\forall {\vv} \in \V\backslash\A,\label{maxmg:c5}\\
    &\; x_{\vv} =1, &\forall {\vv} \in \A\cup \B,\label{maxmg:c6}\\
    &\; x_{\vv} \in \{0,1\}, &\forall {\vv} \in \V, \label{maxmg:c7}\\
    &\; y_{{\vv}{\ww}} \in \{0,1\}, &\forall {\vv}, {\ww} \in \V.\label{maxmg:c8}
\end{align}
where $\V = \conv(\A) \cap \Z^d$.
\end{theorem}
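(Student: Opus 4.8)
The plan is to show that the integer program \eqref{eq:dmaxmg} is a correct encoding of the maximal mediated graph: feasible solutions are in bijection with $\A$-mediated graphs supported in $\conv(\A)\cap\Z^d$ (in the reduced form of Remark \ref{rem:1}), and the objective counts vertices, so an optimal solution yields an element of $\MaxMG_\A$. By Theorem \ref{theo:maximal}, this optimal vertex set is exactly the maximal mediated set, so any optimal solution does the job. Since the argument mirrors the correctness proof of \eqref{eq:dminmg} in Theorem \ref{th:discrete}, I would keep it brief and emphasize only the differences (the change from minimization to maximization, and the fact that $\V$ is fixed to the full lattice polytope rather than a generic superset).

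First I would interpret the variables: $x_{\vv}=1$ iff ${\vv}$ is a vertex of the graph, $y_{{\vv}{\ww}}=1$ iff $({\vv},{\ww})$ is an arc. I would then check, constraint by constraint, that the feasible region is precisely the set of (reduced) $\A$-mediated graphs on $\V=\conv(\A)\cap\Z^d$. Constraints \eqref{maxmg:c1}--\eqref{maxmg:c2} (an aggregated form of the ``arc implies both endpoints present'' conditions, as in the Remark following Theorem \ref{th:discrete}) force arcs to have active endpoints; here it suffices to bound incoming arcs at every ${\ww}$ and outgoing arcs at vertices of $\A$, because \eqref{maxmg:c5} already controls the outdegree of every non-$\A$ vertex. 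Constraints \eqref{maxmg:c3}--\eqref{maxmg:c4} encode the mediated condition: whenever $({\vv},{\ww})$ is an arc, $2{\vv}-{\ww}$ must also lie in $\V$ (else \eqref{maxmg:c4} kills the arc) and $({\vv},2{\vv}-{\ww})$ is an arc too; combined with \eqref{maxmg:c5}, each non-$\A$ vertex has exactly the two children ${\ww},2{\vv}-{\ww}$, which is the reduced form of Remark \ref{rem:1}. Constraint \eqref{maxmg:c5} further guarantees $\deg^+({\vv})=0$ is impossible for an active non-$\A$ vertex, so $\delta^+({\vv})\neq\emptyset$ as required by Definition \ref{def:mg}, and \eqref{maxmg:c6} plants $\A$ (here $\B$ should be read as empty) in $V$. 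Conversely, any reduced $\A$-mediated graph on $\V$ yields a feasible point, so the correspondence is exact. Since $\conv(\A)\cap\Z^d$ is finite the feasible region is nonempty (the edgeless graph on $\A$ is feasible) and finite, so an optimum exists; maximizing $|\A|+\sum_{{\vv}\in\V\setminus\A}x_{\vv}=|V|$ selects a graph of maximum vertex cardinality, i.e.\ a maximal mediated graph, and Theorem \ref{theo:maximal} identifies its vertex set with the (unique) maximal mediated set, completing the argument.

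The only mildly delicate point — and the place I would be most careful — is verifying that the reduced set of arc-admissibility constraints \eqref{maxmg:c1}--\eqref{maxmg:c2} really is equivalent to the full ``$y_{{\vv}{\ww}}\le x_{\vv}$ and $y_{{\vv}{\ww}}\le x_{\ww}$ for all ${\vv},{\ww}$'' system once \eqref{maxmg:c5} is in force: one must rule out configurations where $x_{\vv}=0$ yet some $y_{{\vv}{\ww}}=1$ with ${\vv}\notin\A$ (excluded because \eqref{maxmg:c5} then forces $\sum_{{\ww}}y_{{\vv}{\ww}}=0$), and the symmetric one with ${\vv}\in\A$ (excluded by \eqref{maxmg:c2}); the ``head'' condition is handled uniformly by \eqref{maxmg:c1}. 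I would also remark that, unlike the minimal case, there is no need for a cardinality-cutting scheme to enumerate all maximal graphs, since by Theorem \ref{theo:maximal} they all share the same vertex set; only the arc set may differ, and a single optimal solution already exhibits the maximal mediated set, which is the quantity of interest in the applications to SOS decompositions of circuit polynomials.
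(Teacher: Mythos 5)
Your argument is correct and follows essentially the same route as the paper, whose proof is a one-liner noting that the formulation mirrors that of Theorem \ref{th:discrete} with the objective sense reversed (and $\B=\emptyset$). Your write-up is in fact more careful than the paper's, particularly in checking that the aggregated constraints \eqref{maxmg:c1}--\eqref{maxmg:c2} together with \eqref{maxmg:c5} still enforce arc--endpoint consistency, and in invoking Theorem \ref{theo:maximal} to identify the optimal vertex set with the maximal mediated set.
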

\begin{proof}
    Note that the formulation is similar to the one provided in Theorem \ref{th:discrete}. Instead of maximizing the number of vertices in the obtained mediated graph, this cardinality is maximized. 
\end{proof}

\noindent{\bf Intersection of SONC and SOS cones}

Let $\A\subset (2\Z)^d$ be a finite affine independent set, and a finite set $\B\subset \Z^d\cap \conv(\A)^\circ$. \cite{hartzer2022initial} showed a complete characterization of the polynomials supported on the circuit $\A\cup \B$ that lie in the intersection of SONC and SOS cones. That result can be adapted in terms of the solution of \eqref{eq:dmaxmg}.

\begin{corollary}\label{th:sossonc}
    Let $f\in\R[\x]$ be a $\SONC$ supported on $\A\cup \B$ and $\x^*$ the solution of \eqref{eq:dmaxmg} for $\A$. Then, $f$ is $\SOS$ if and only if either $\x^*_{\be}=1$ or $\be\in (2\Z)^d$ and its coefficient is positive for every $\be\in\B$.
\end{corollary}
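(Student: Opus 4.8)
The plan is to translate the statement about the optimum $\x^*$ into a combinatorial one about mediated sets, and then prove the two implications separately, using Theorem~\ref{the:sobs} for ``if'' and the classical support analysis of sums of squares for ``only if''. First I would observe that, by Theorem~\ref{th:max_discrete}, an optimal $\x^*$ of \eqref{eq:dmaxmg} encodes a maximal mediated graph $G^*=(V^*,A^*)$ with $V^*=\{\vv\in\V:\x^*_{\vv}=1\}$, and that by Theorem~\ref{theo:maximal} the set $V^*$ is precisely the maximal $\A$-mediated set; in particular $V^*$ does not depend on the chosen optimum, so ``$\x^*_{\be}=1$'' is well defined and equivalent to ``$\M_\A(\be)\neq\emptyset$'', i.e. to $\be$ lying in some $\A$-mediated set. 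Since $f$ is $\SONC$ on $\A\cup\B$, I would fix a decomposition $f=\sum_{\be\in\B}f_{\be}$, up to a harmless $\SOS$ remainder that is a sum of monomial squares supported on $\A\subset(2\Z)^d$, with each $f_{\be}=\sum_{\a\in\A}c_{\a}\x^{\a}+c_{\be}\x^{\be}$ a nonnegative circuit, and record that the coefficient $c_{\be}$ of $\x^{\be}$ in $f_{\be}$ equals its coefficient in $f$.

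For the ``if'' direction I would argue circuit by circuit. If $\x^*_{\be}=1$, then $\be\in V^*$, so $G^*\in\M_\A(\be)$, and Theorem~\ref{the:sobs} applied to $f_{\be}$ with $G^*$ writes $f_{\be}$ as a sum of binomial squares, hence $f_{\be}\in\SOS$. If instead $\be\in(2\Z)^d$ with $c_{\be}>0$, then $f_{\be}$ has all exponents even and all coefficients positive, so it is a sum of monomial squares, again in $\SOS$. Summing over $\be\in\B$ yields $f\in\SOS$.

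For the ``only if'' direction I would fix an $\SOS$ certificate $f=\sum_\ell q_\ell^2$ and an exponent $\be\in\B$ that is \emph{not} of the trivial type ``$\be\in(2\Z)^d$ with $c_{\be}>0$''; since a circuit has $c_{\be}\neq 0$, this forces $\be$ to have an odd coordinate, or $\be\in(2\Z)^d$ with $c_{\be}<0$, and the goal becomes $\be\in V^*$. Here I would adapt the argument of \cite{hartzer2022initial}, resting on \cite[Lemma~4.3]{reznick1989forms}: from $f=\sum_\ell q_\ell^2$ one gets $\bigcup_\ell\supp(q_\ell)\subseteq\tfrac12\conv(\A)\cap\Z^d$, and, tracking which diagonal squares $\x^{2e}$ are not cancelled in $f$, the doubled support $T:=\{2e:e\in\supp(q_\ell)\text{ for some }\ell\}$ is an $\A$-mediated set: any element of $T$ that is neither a vertex of $\conv(\A)$ nor carries a positive monomial-square coefficient in $f$ must be produced by an off-diagonal product $e+e'$ with $e\neq e'$, hence is the midpoint of the two elements $2e,2e'\in T$, and this recursion terminates at $\A$. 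Our $\be$ is exactly of this kind, so $\be\in T$; since every $\A$-mediated set is contained in the maximal one, $\be\in V^*$.

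The step I expect to be the main obstacle is this last one: showing that an \emph{arbitrary} $\SOS$ representation forces $T$ to be $\A$-mediated. This requires the cancellation bookkeeping behind Reznick's lemma and the observation that exponents with an odd coordinate, or with negative coefficient in $f$, survive cancellation and must come from cross terms. The remaining ingredients --- the identification of $V^*$ with the maximal mediated set via Theorems~\ref{th:max_discrete}--\ref{theo:maximal}, the circuit-wise use of Theorem~\ref{the:sobs}, and the monomial-square identity --- are routine.
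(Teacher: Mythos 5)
Your proof is correct and follows the same logical route as the paper: identify the optimal solution of \eqref{eq:dmaxmg} with the maximal $\A$-mediated set via Theorems~\ref{th:max_discrete} and~\ref{theo:maximal}, and then invoke the characterization of which $\SONC$ polynomials supported on $\A\cup\B$ are $\SOS$ in terms of that set. The only real difference is that the paper's proof is a one-line citation of \cite[Theorem 3.9]{hartzer2022initial} for that characterization, whereas you re-derive it: the ``if'' direction circuit-by-circuit via Theorem~\ref{the:sobs} plus monomial squares (this works and makes the corollary essentially self-contained), and the ``only if'' direction by reconstructing the Reznick--Hartzer et al.\ cancellation argument. That reconstruction is the one place requiring care, as you acknowledge: the doubled support $T$ of an arbitrary $\SOS$ certificate is not literally an $\A$-mediated set as you define it, since elements of $\B\cap(2\Z)^d$ carrying positive monomial-square coefficients may fail the midpoint condition; the precise claim one needs is that every exponent of $\B$ of non-trivial type lies in an $\A$-mediated subset built from $T$, which is exactly the content of the cited theorem. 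So nothing essential is missing, but if you intend the argument to be self-contained you would owe the reader that bookkeeping in full rather than as a sketch.
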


\begin{proof}
    The result is straightforward from \cite[Theorem 3.9]{hartzer2022initial} and Theorem \ref{th:max_discrete}.
\end{proof}

\section{Computational Experiments}\label{sec:exp}

In this section we report the results of our computational experience, that we performed to validate the proposals and compare with other approaches in the literature. 

\subsection{Data}

We use the datasets provided by \cite{hartzer2022initial} to analyze their proposed algorithm to compute maximal mediated sets and that the authors made publicly available at {\uu}rl{https://polymake.org/downloads/MMS/csv/}. The dataset contains simplicial sets, $\Delta = \{{\vv}_0,{\vv}_1, \ldots, {\vv}_d\}$, in dimensions $d \in \{2,3,4,5,6,7,8,9\}$ of different sizes, measured by their degrees:
$$
m(\Delta) := \min\left\{\rho \in \Z_+: \sum_{l=1}^d w_l \leq 2\rho \text{ for all } {\ww} \in \conv(\Delta)\cap(2\Z)^d\right\}
$$
The authors generate all the simplicial sets and lattices for the some of the dimensions and degrees, and sampled these sets for the rest of dimensions and higher degrees. In total, $8,941,852$ maximal mediated sets where computed in \citep{hartzer2022initial}, distributed by dimension as shown in Table \ref{table_datasets}. We randomly select some of these instances to compute the minimal and the maximal mediated graph using our approach. In the third column of Table \ref{table_datasets}, we detail the number of random instances selected for each combination of $d$ and $m$.

\begin{table}[h!]
\centering
\tiny
\begin{tabular}{p{0.2cm}|p{0.3cm}p{0.3cm}p{0.24cm}p{0.2cm}p{0.3cm}p{0.2cm}p{0.3cm}p{0.3cm}p{0.3cm}p{0.3cm}p{0.3cm}p{0.3cm}p{0.3cm}p{0.3cm}p{0.07cm}p{0.3cm}p{0.3cm}p{0.3cm}}
\hline
\textbf{d}      & 2  & 2   & 2   & 3  & 3  & 4  & 4  & 4  & 4  & 4  & 5  & 5  & 6  & 6  & 7  & 7  & 8  & 9  \\
\textbf{m}      & 50 & 100 & 150 & 10 & 16 & 6  & 8  & 10 & 14 & 16 & 8  & 16 & 16 & 20 & 4  & 16 & 16 & 16 \\
\textbf{\#} & 1000 & 1000 & 300  & 500 & 1000 & 150 & 1000 & 1000 & 1000 & 1000 & 1000 & 1000 & 1000 & 1000 & 15  & 1000 & 1000 & 300 \\
\hline
\end{tabular}
\caption{Size for the samples of the simplices from \citep{hartzer2022initial}.\label{table_datasets}}
\end{table}
We compute the maximal mediated graphs of the selected random sets $\A$, as detailed above. We also run the enumeration-and-filtering algorithm proposed in \citep{hartzer2022initial} to compare the efficiency of our approach with respect to the only previous proposal for this task.

For the minimal mediated graphs, a set of interior points $\B$ is required to obtain \emph{proper} subgraphs. We then run our model for a random selection of $s \in \{1,3,5\}$ integer points inside the convex hull of the points in $\A$ to analyze the performance on different number of points in $\B$. 

The models were coded in \texttt{Python} and solved using the optimization solver \texttt{Gurobi 12.0} on a Apple M1 Max with 64 GB RAM.

\subsection{Results on Maximal Mediated Graphs}

We run our MILP model for all the samples of the instances provided in Table \ref{table_datasets}. In total 15110 problems where solved. We also run the algorithm proposed in \citep[Algorithm 4.3]{hartzer2022initial}. 

In Figure \ref{fig:pp} we show the performance profiles for obtaining the maximal mediated graph both our mathematical optimization-based approach and the algorithm proposed in \citep{hartzer2022initial} (named \texttt{BM} and \texttt{HRWY}, for the initials of the authors' last names). In the plots we represent percent of instances solved in less than each unit of CPU time (in log-scales to ease the distinction). For a fair comparison, we do not include in those times the time required to enumerate the points inside the given lattices. It is evident that the performance of our approach is superior than the one in \citep{hartzer2022initial}. Although solving a MILP may require, in worst case, the enumeration of the whole set of integer feasible points, the solutions methods for these problems are designed to avoid such an enumeration by branching, bounding, and reducing the feasible region by cutting off part of the space. 

The summary of the obtained results is shown in Table \ref{t:max}, where we report, for each value of $d$ and $m$, the average CPU times (in seconds) with both our methodology (\texttt{time\_BM}) and the enumerative methodology (\texttt{time\_HRWY}) only for those instances that were optimally solved with each procedure. The percent of instances in each row that were not optimally solved within one hour with the enumerative methodology is reported in column \texttt{unsolved\_HRWY}. Our methodology was capable to solve optimally all the instances within the time limit.

\begin{table}
\centering

\begin{tabular}{| l | l | l | l | l |}
\hline
$d$ &$m$ & \texttt{time\_BM}& \texttt{time\_HRWY} & \texttt{unsolved\_HRWY} \\
\hline
\textbf{2} & \textbf{50} & 0.05 & 5.71 &0\% \\
\hline
 & \textbf{100} & 1.97 & 373.61 & 4.30\% \\
\hline
 & \textbf{150} & 23.91 & 818.49 & 46.67\% \\
\hline
\textbf{3} & \textbf{10} & 0.00 & 0.01 &0\% \\
\hline
 & \textbf{16} & 0.01 & 0.20 &0\% \\
\hline
\textbf{4} & \textbf{6} & 0.00 & 0.00 &0\% \\
\hline
 & \textbf{8} & 0.00 & 0.00 &0\% \\
\hline
 & \textbf{10} & 0.00 & 0.04 &0\% \\
\hline
 & \textbf{14} & 0.01 & 0.83 &0\% \\
\hline
 & \textbf{16} & 0.02 & 1.63 &0\% \\
\hline
\textbf{5} & \textbf{8} & 0.00 & 0.02 &0\% \\
\hline
 & \textbf{16} & 0.04 & 4.60 & 0.10\% \\
\hline
\textbf{6} & \textbf{16} & 0.09 & 32.05 & 0.30\% \\
\hline
 & \textbf{20} & 2.09 & 197.51 & 9.90\% \\
\hline
\textbf{7} & \textbf{4} & 0.00 & 0.01 &0\% \\
\hline
 & \textbf{16} & 3.16 & 58.28 & 2.20\% \\
\hline
\textbf{8} & \textbf{16} & 5.04 & 130.39 & 6.80\% \\
\hline
\textbf{9} & \textbf{16} & 22.51 & 307.61 & 17.67\% \\
\hline
\end{tabular}
\caption{Summary of our computational experiments for the maximal mediated graph.\label{t:max}}
\end{table}

\begin{figure}[h!]
\includegraphics[width=0.23\textwidth]{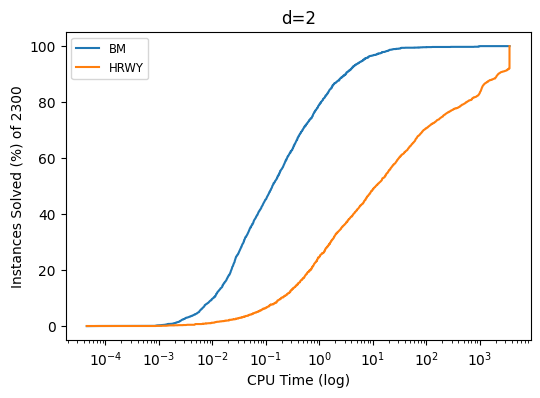}~\includegraphics[width=0.23\textwidth]{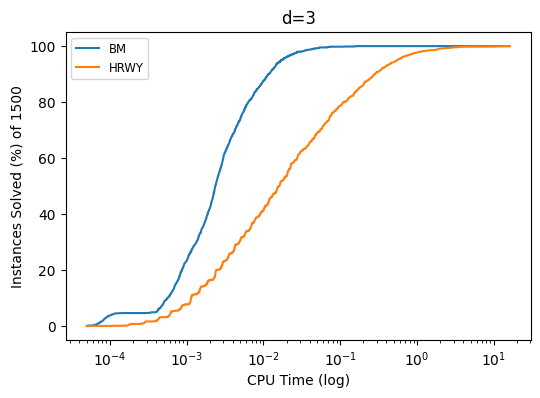}~\includegraphics[width=0.23\textwidth]{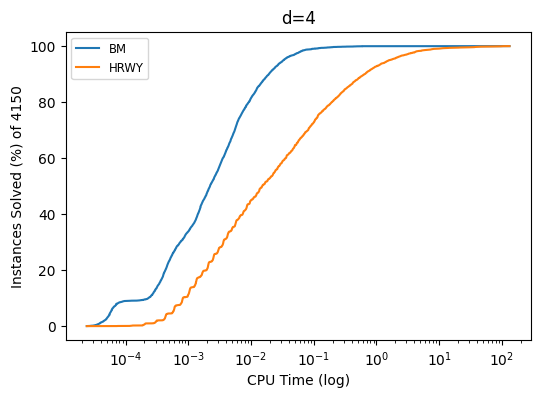}~\includegraphics[width=0.23\textwidth]{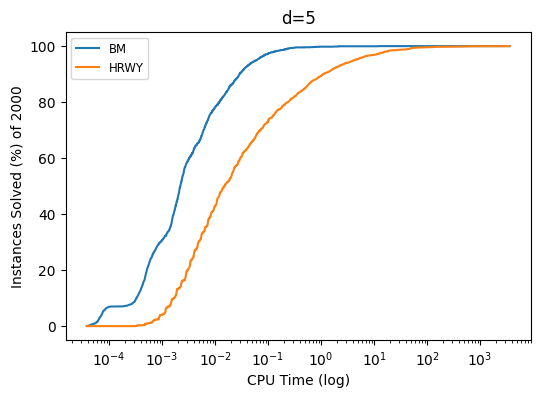}\\
\includegraphics[width=0.23\textwidth]{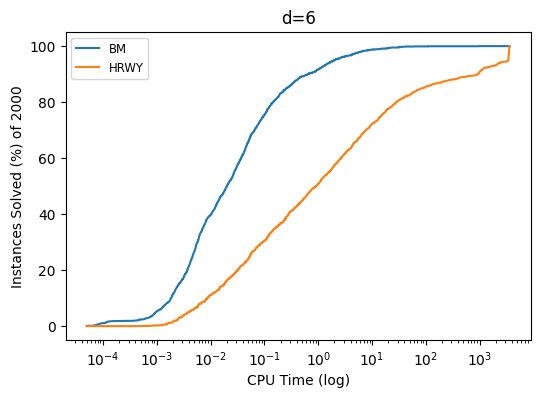}~\includegraphics[width=0.23\textwidth]{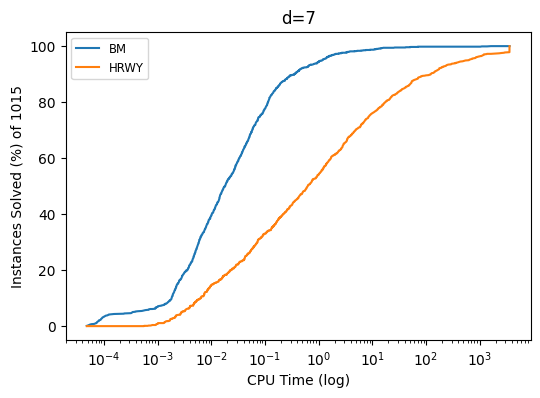}~\includegraphics[width=0.23\textwidth]{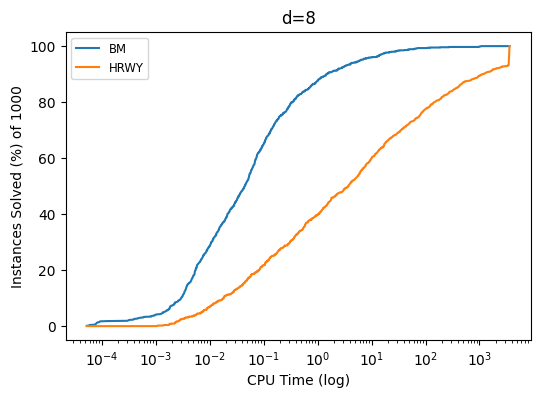}~\includegraphics[width=0.23\textwidth]{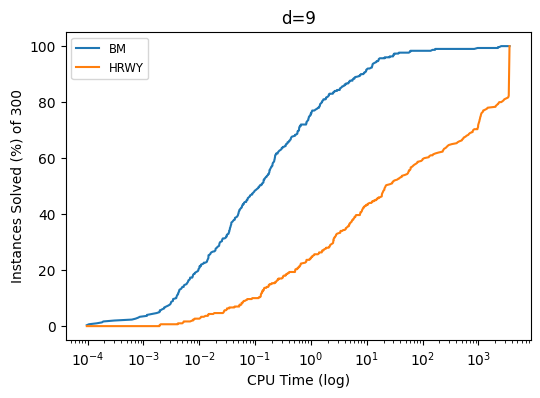}
\caption{Performance profiles for our approach and the one in \cite{hartzer2022initial}.\label{fig:pp}}
\end{figure}

In Figure \ref{fig:boxplot} we show, for each dimension $d$, the average CPU times (log-scale) to compute the maximal mediated graph with the optimization-based methodology that we propose (\texttt{BM}) and with the enumerative strategy proposed in \citep{hartzer2022initial} (\texttt{HRWY}). We also represent in the same plot, the average CPU times to enumerate the points inside the simplex, required to compute the optimal mediated graph with both methodologies.

In view of the results we conclude that the CPU time required by our optimization based methodology to obtain the maximal mediated graphs is significantly smaller than the enumerative procedure proposed in   \citep{hartzer2022initial}, and in some cases even able to construct the optimal solutions when the enumerative approach is not capable to do it.

Although the construction involved in the enumerative approach can be useful to understand the geometry of the mediated graphs, our approach has the advantage that it allows to solve then larger instances in reasonable CPU time. The combination of our approach and the reduction strategies in the enumerative approach by incorporating conditions in the form of linear inequalities in our model could result in a better approach, that will be explored in a forthcoming paper.

\begin{figure}[h!]
\includegraphics[width=0.24\textwidth]{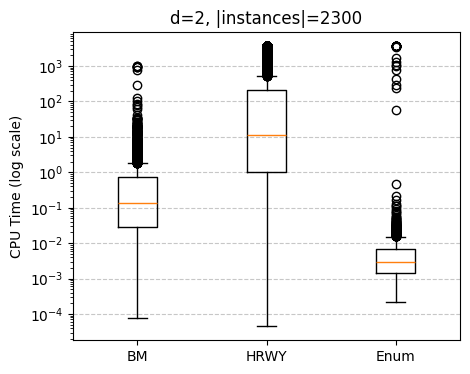}~\includegraphics[width=0.24\textwidth]{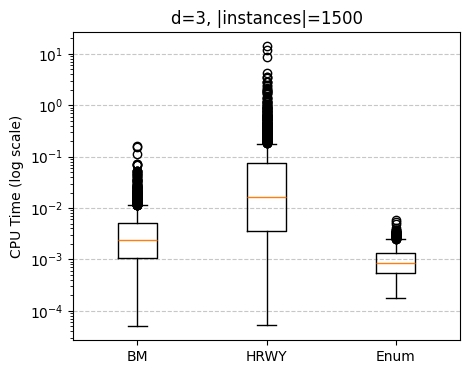}~\includegraphics[width=0.24\textwidth]{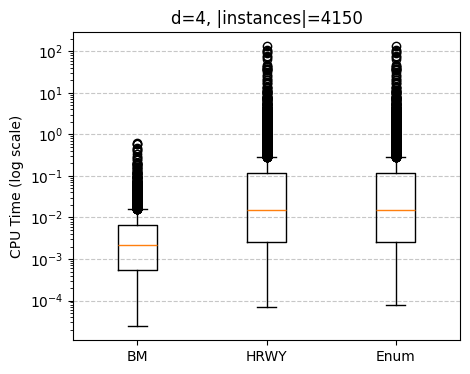}~\includegraphics[width=0.24\textwidth]{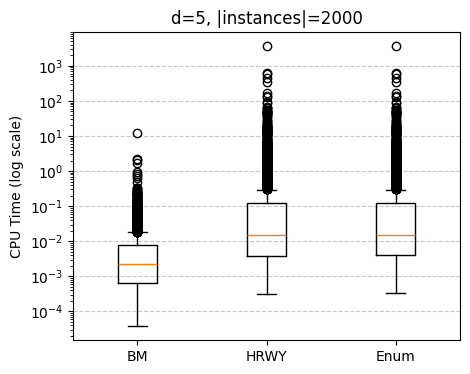}\\
\includegraphics[width=0.24\textwidth]{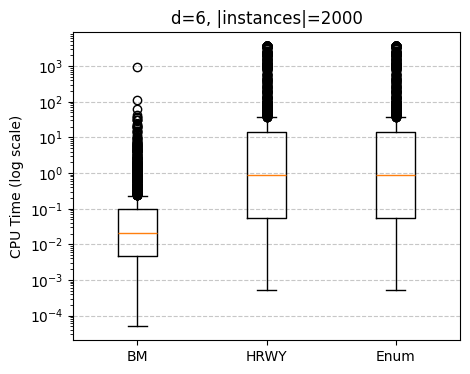}~\includegraphics[width=0.24\textwidth]{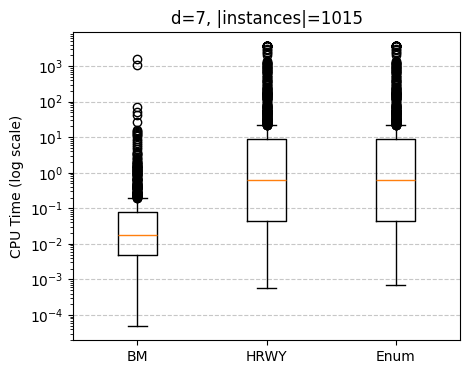}~\includegraphics[width=0.24\textwidth]{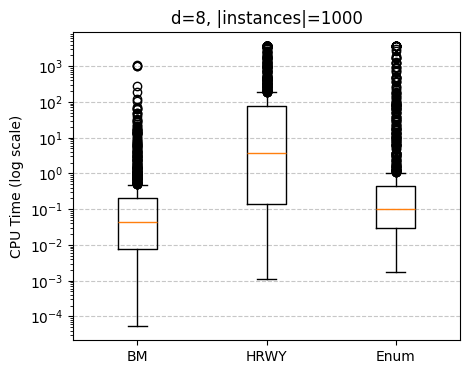}~\includegraphics[width=0.24\textwidth]{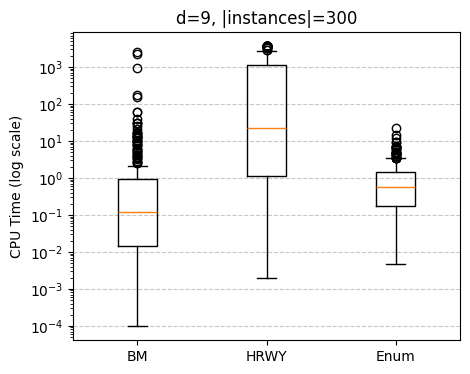}
\caption{Boxplots for the CPU times (in log-scale) required to compute the maximal mediated graphs with our methodology (\texttt{BM}), the methodology in \citep{hartzer2022initial}, and the CPU time required to enumerate the required points.\label{fig:boxplot}}
\end{figure}

\subsection{Results on Minimal Mediated Graphs}

For the minimal mediated graphs we randomly select $20$ random instances from each row, i.e., or each dimension $d$. For each of them, we randomly select $s$ interior points to the simplex $\A$, for $s \in \{1,3, 5\}$. These points form the set $\B$. Then, we consider three different domains for the vertices in the minimal mediated graph $\MM \in \{\R^d, \Z^d, (2\Z)^d\}$. In total, we solve $270$ instances with each of the three optimization models that we propose. We run the extended approach highlighted in Remark \ref{remark:allmin} to construct the whole set of minimal mediated graph for given sets $\A$ and $\B$ on an specif domain $\MM$.

For $\MM=\R^d$ (continuous domain), since the mathematical optimization problem is affected by the number of potential mediated vertices in the graph (upper bounded by $\nu_\A(\B)$ which can be large), following the suggestion in \citep{blanco2024minimal}, we solve the problems iteratively. Specifically, we start by taking $\mathcal{V}$ an index set with cardinal $1$, and solve the problem. In case the problem is feasible, we are done, and the solution is a minimal mediated graph. Otherwise, we increase the cardinal of $\mathcal{V}$ and repeat the process until a feasible solution is found. In most of the cases the cardinal of the minimal mediated graph is much smaller than the upper bound and we avoid solving huge integer linear problems. In general, we empirically tested that checking infeasibility requires much less time than solving a single but larger integer linear problem. 

In Table \ref{tab:minmg} we summarize the results of our experiments. There, the first two columns indicate the dimension ($d$) and the number of random points chosen in the set $\B$. Then, for each of the three domains $\MM \in \{\R^d, \Z^d, (2\Z)^d\}$, and then, different approaches we report the average sizes of the minimal mediated graph, the CPU time (in seconds) required to compute one minimal mediated graph, the average CPU time (in seconds) per extra minimal mediated graph  computed, and the number of minimal mediated graphs obtained. For the domain $(2\Z)^d$, either a single minimal mediated set was found or the problem was infeasible. Thus, we do not report the value \emph{Av. Time Rest} for this domain since all the values are zero.

As expected, as the domain is larger, the problem becomes more challenging. Although one could also expect that al larger the size of $|\B|$ more difficult the problem, this is not always true, as can be seen for $d=2$, where the problem for $s=3$ is more time demanding than the others. The same happens for the computation of more than one minimal mediated graph in $\MinMG_\A(\B)$. 

Regarding the size of the obtained minimal mediated graph, the number of vertices is slightly larger for $(2\Z)^d$ than for $\Z^d$, and the same happens for $\Z^d$ and $\R^d$. Note that although the approaches are different, all solve the same problems but in different, but nested, feasible regions. In fact, feasible solutions in the domain $(2\Z)^d$, are also feasible for $\Z^d$, and those for $\Z^d$ are feasible for $\R^d$. This observation also affects the number of graphs in $\MinMG_\A(\B)$, where for the domain $\R^d$ a larger numbers of minimal medited graphs were found.

Note that the case $d=2$ is particular, since in the dataset in \citep{hartzer2022initial}, the authors consider simplices with very large extremes compared to the other. Thus, the planar case can be more time demanding than the higher dimensional, because of the degree of the simplices that have being tested.

\begin{table}[h!]
\begin{tabular}{|p{0.02cm}p{0.2cm}|lll|lll|ll|lll|}\hline
\multirow{2}{*}{$d$} &\multirow{2}{*}{$|\mathcal{B}|$} & \multicolumn{3}{c|}{\# Vertices} & \multicolumn{3}{c|}{Time First Sol.} & \multicolumn{2}{c|}{Av. Time Rest} & \multicolumn{3}{c|}{$|\MinMG_\A(\B)|$}\\
  &  & $(2\Z)^d$ & $\Z^d$ & $\R^d$ & $(2\Z)^d$ & $\Z^d$ & $\R^d$ & $\Z^d$ & $\R^d$ & $(2\Z)^d$ & $\Z^d$ & $\R^d$ \\\hline
 \multirow{3}{*}{2}    & 1 & 5.24 & 5.24 & 4.53 & 0.02 & 0.15 & 43.32 & 0.32 & 0.01 & 1.00 & 1.06 & 1.29  \\
                       & 3 & 7.83 & 7.78 & 7.06 & 0.04 & 1.35 & 55.53 & 4.86 & 2.51 & 1.00 & 2.94 & 17.56 \\
                       & 5 & 10.39 & 10.11 & 8.94 & 0.07 & 0.91 & 9.78 & 1.91 & 0.30 & 1.00 & 6.44 & 17.28  \\\hline
\multirow{3}{*}{3}     & 1 & 5.18 & 5.18 & 5.18 & 0.01 & 0.03 & 0.01 & 0.02 & 0.00 & 1.00 & 1.00 & 1.00  \\
                       & 3 & 8.00 & 7.73 & 7.73 & 0.01 & 0.06 & 0.86 & 0.11 & 0.29 & 1.00 & 1.45 & 2.64  \\
                       & 5 & 10.55 & 10.45 & 10.45 & 0.01 & 0.11 & 16.10 & 0.07 & 3.62 & 1.00 & 3.36 & 10.73  \\\hline
  \multirow{3}{*}{4}   & 1 & 6.00 & 6.00 & 5.40 & 0.01 & 0.18 & 13.83 & 0.26 & 0.00 & 1.00 & 1.00 & 1.00  \\
                       & 3 & 8.60 & 8.60 & 7.80 & 0.02 & 0.26 & 15.32 & 0.33 & 0.02 & 1.00 & 1.30 & 1.40  \\
                       & 5 & 10.40 & 10.40 & 9.40 & 0.03 & 0.33 & 131.70 & 0.33 & 0.01 & 1.00 & 1.40 & 1.40  \\\hline
  \multirow{3}{*}{5}   & 1 & 7.13 & 7.13 & 7.13 & 0.00 & 0.12 & 0.01 & 0.18 & 0.00 & 1.00 & 1.00 & 1.00  \\
                       & 3 & 9.39 & 9.26 & 9.26 & 0.01 & 0.15 & 0.02 & 0.20 & 0.01 & 1.00 & 1.10 & 1.19 \\
                       & 5 & 11.10 & 11.39 & 11.39 & 0.01 & 0.18 & 0.05 & 0.24 & 0.04 & 1.00 & 1.16 & 1.32  \\\hline
  \multirow{3}{*}{6}   & 1 & 8.10 & 8.10 & 8.10 & 0.00 & 0.16 & 0.03 & 0.31 & 0.00 & 1.00 & 1.00 & 1.00  \\
                       & 3 & 10.25 & 10.25 & 10.25 & 0.01 & 0.32 & 0.04 & 0.35 & 0.01 & 1.00 & 1.00 & 1.00  \\
                       & 5 & 11.85 & 12.50 & 12.50 & 0.01 & 1.15 & 0.08 & 0.64 & 0.06 & 1.00 & 1.20 & 1.40  \\\hline
  \multirow{3}{*}{7}   & 1 & 9.00 & 9.00 & 8.47 & 0.01 & 0.87 & 14.69 & 1.68 & 0.00 & 1.00 & 1.00 & 1.00  \\
                       & 3 & 11.06 & 11.06 & 10.41 & 0.02 & 1.01 & 15.66 & 1.63 & 0.01 & 1.00 & 1.00 & 1.00  \\
                       & 5 & 13.47 & 13.47 & 12.71 & 0.03 & 2.11 & 33.70 & 2.55 & 0.13 & 1.00 & 1.18 & 1.82  \\\hline
\multirow{3}{*}{8}     & 1 & 10.08 & 10.08 & 10.08 & 0.01 & 0.81 & 0.02 & 1.19 & 0.00 & 1.00 & 1.00 & 1.00  \\
                       & 3 & 12.08 & 12.08 & 12.08 & 0.01 & 0.63 & 0.04 & 1.13 & 0.01 & 1.00 & 1.08 & 1.08  \\
                       & 5 & 14.23 & 14.23 & 14.23 & 0.01 & 1.55 & 0.06 & 3.53 & 0.02 & 1.00 & 1.08 & 1.08  \\\hline
\multirow{3}{*}{9}     & 1 & 11.00 & 11.00 & 11.00 & 0.03 & 3.36 & 0.04 & 10.86 & 0.00 & 1.00 & 1.00 & 1.00  \\
                       & 3 & 13.05 & 13.05 & 13.05 & 0.03 & 5.30 & 0.06 & 11.15 & 0.01 & 1.00 & 1.00 & 1.00  \\
                       & 5 & 15.10 & 15.10 & 15.10 & 0.03 & 4.32 & 0.09 & 8.43 & 0.02 & 1.00 & 1.00 & 1.00  \\\hline
\end{tabular}
\caption{Summary of the results of our experiments for the minimal mediated graph.\label{tab:minmg}}
\end{table}

In Figure \ref{fig:ppmin} we show the performance profiles (in log-scale to ease its reading) for our experiments on the minimal mediated graph distinguishing by dimension $d$, size of the set $\B$, $s$, and domain. Each of those lines were constructing by indicating the percent of instances optimally solved in less than a given number of seconds (in log-scale). The less time consuming approach (the superior lines) seems to be the domain $(2\Z)^d$ for $s=1$ (solid blue lines). In contrast, the more challenging problems were the continuous domain (green) and some of the $\Z^d$ domain (red). 

\begin{figure}[h!]
\includegraphics[width=\textwidth]{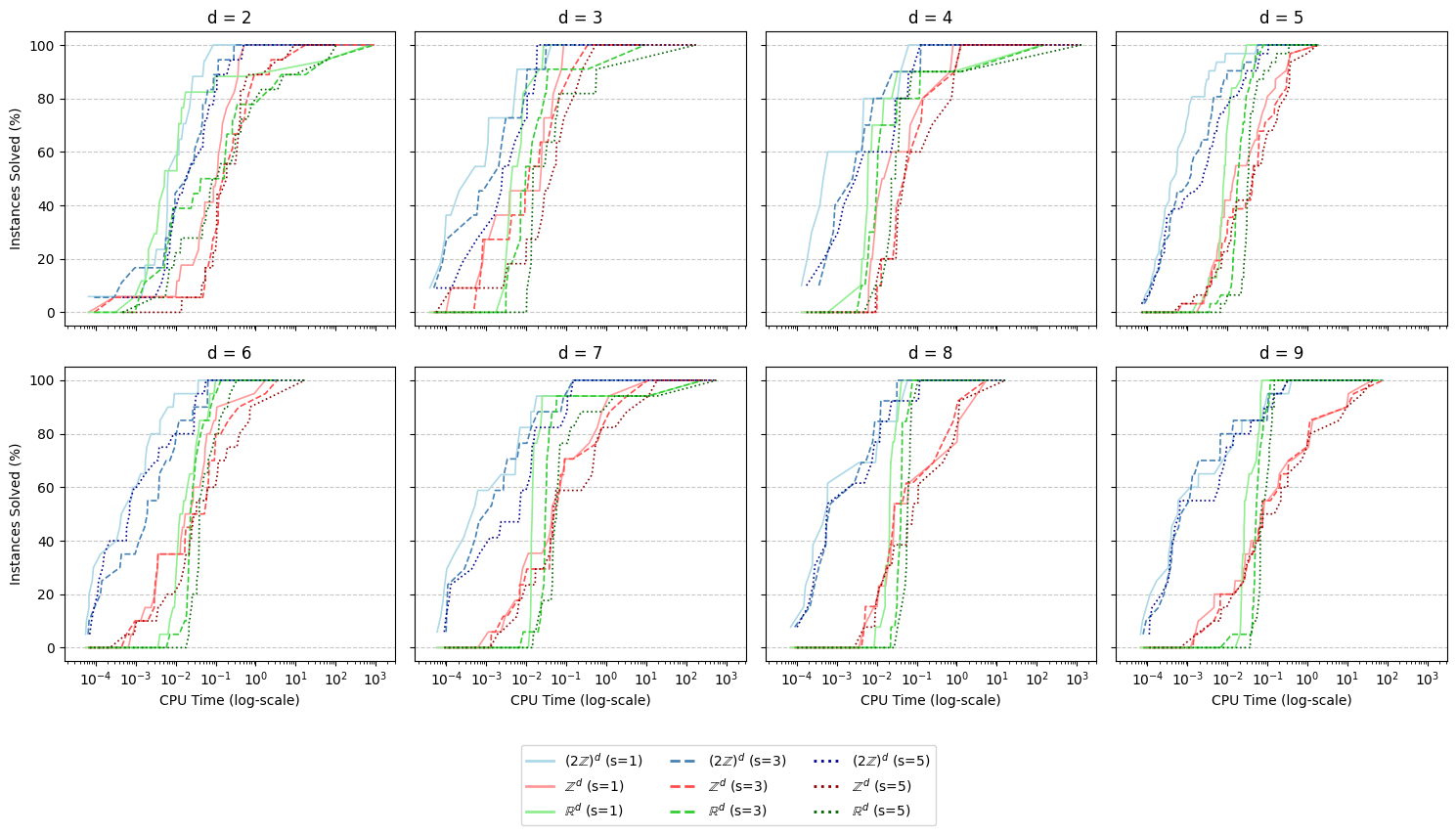}
\caption{Performance profile for our experiments on the minimal mediated graphs.\label{fig:ppmin}}
\end{figure}

In conclusion, our approach to compute minimal mediated graphs was capable to solve all the instances (even the $d=9$ ones) in more than reasonable CPU time, both to compute a single mediated graph or all the graphs in the family $\MinMG_A(\B)$, for all the domains that we consider. Thus, this novel approach with multiple application in conic decomposition has been proven to be useful and can be practically applied to these problems.

\section*{Conclusions}

In this paper we formally introduce the notion of mediated graph, motivated by the number of applications in convex geometry that have emerged in the last years on this type of structures. We give a first step by analyzing some of the graph theoretical properties of these graphs. We also focus on optimal mediated graphs, as those mediated graphs that are extremal for the partial order induced by the cardinality of their vertex sets. We address the computation of these minimal and maximal graphs on different domains by means of mixed integer linear optimization problems that allow to efficiently compute these subgraphs. The mathematical models that we propose are strengthened by using the theoretical properties that we study. An extensive computational experience allows to empirically validate our proposal, where we show that our framework allows to deal with reasonable sizes and dimensions of mediated graphs, being able to compute these structures in much less CPU time than previous proposals. For each of these structures we detail some applications in convex optimization where optimal mediated graphs have a direct impact. 

The structure of the optimal mediated graphs that we analyze here is still to be further analyzed. The existence of particular structures of sets $\A$ and $\B$ where the computation of minimal mediated graphs can be simplified is nowadays unknown. For maximal mediated graph, it was conjectured by \cite{reznick1989forms}, and empirically tested by  \cite{hartzer2022initial} for millions of simplices, that in dimension $2$ all simplices are either $H$-simplices or $M$-simplices (i.e., its maximal mediated graph is either the whose set of integer points inside the convex hull of $\A$ or it is the set $\A$ and its midpoints). The polyhedral study of the integer linear programs that we propose for its computation could give some light to prove the conjecture. On the other hand, in view of the applications of these structures in SOS-decomposition or SOC-representations, it may happen, for instance, that these representations are not possible or that are not efficient since they require large mediated graphs. In those cases, it is convinient to \emph{relax} the condition of the mediated graph to find \emph{close-enough} decomposition/stuctures. In this type of approximation schemes, the role of integer linear programming is still unknown.

\section*{Acknowledgements}

The authors were partially supported by grant PID2020-114594GB-C21 funded by MICIU/AEI/10.13039/501100011033;  grant RED2022-134149-T funded by MICIU/AEI
/10.13039/501100011033 (Thematic Network on Location Science and Related Problems); grant C-EXP-139-UGR23 funded by the Consejería de Universidad,
Investigación e Innovación and by the ERDF Andalusia Program 2021-2027, grant AT 21\_00032, and the IMAG-María de Maeztu grant CEX2020-001105-M /AEI /10.13039/501100011033. The second author was partially funded by LAAS CNRS LabEx CIMI (ANR-11-LABX-0040). 



\newpage
\appendix

\section{Proof of Theorem \ref{the:sobs}}\label{appendix}

\begin{proof}[Proof of Theorem \ref{the:sobs}]
    For the first part, notice that the polynomial $f$ can be written as a convex combination of the extremal nonnegative circuit with its support
    \begin{equation}
    f(\x)=\dsum_{m=0}^1\frac{\Theta_{f}+(-1)^mc}{2\Theta_{f}}\left[\dsum_{j=0}^dc_j\x^{\ba(j)}+(-1)^m\Theta_f\x^{\be}\right]
    \end{equation}

    Now, Note $\s^*_f$ is the solution of
\begin{equation}
\begin{bmatrix}
    \ba(1)-\ba(0)\\
    {\vv}dots\\
    \ba(d)-\ba(0)
\end{bmatrix}\s^*_f=\begin{bmatrix}
    \log\left(\frac{\lambda_1c_0}{\lambda_0c_1}\right)\\
    {\vv}dots\\
    \log\left(\frac{\lambda_dc_0}{\lambda_0c_d}\right)
\end{bmatrix}
\end{equation}
the linear system has a unique solution by affine independence of $\ba(0),\ba(1),\ldots,\ba(d)$. In this way, we have the extremal case $f^*_m(\x)=\dsum_{j=0}^dc_j\x^{\ba(j)}+(-1)^m\Theta_f\x^{\be}$, $m=0,1$, evaluated in $\left(e^{\s^*_f}\circ \x\right)$, where $\circ$ stands for the Hadamar product becomes 
\begin{equation}
    f^*_m\left(e^{\s^*_f}\circ \x\right)=\frac{c_0}{\lambda_0}e^{\langle \s^*_f,\ba(0)\rangle}\left[\dsum_{j=0}^d \lambda_j\x^{\ba(j)}+(-1)^m\x^{\be} \right].
\end{equation}

Now, $f^*_1\left(e^{\s^*_f}\circ \x\right)$ is a positive multiple of a simplicial agiform so by Reznick's SOS decomposition of agiforms \cite{reznick1989forms} and the fact of $\be\notin (2\Z)$ it easy to see that 

\begin{multline}
    f^*_m\left(e^{\s^*_f}\circ \x\right)=\\\frac{c_0}{\lambda_0}e^{\langle \s^*_f,\ba(0)\rangle}\left[c_{\be\be}\left(\x^{\frac{\be'}{2}} + (-1)^m\x^{\frac{\be''}{2}}\right)^2 + \dsum_{\ga\in V\setminus (\A\cup \{\be\})}c_{\be\ga}\left(\x^{\frac{\ga'}{2}}-\x^{\frac{\ga''}{2}}\right)^2\right].
\end{multline}
where $c_{\be\ga}\geq$ is the entry $(\be,\ga)$ of $(L^+(G)_{V\backslash \A})^{-1}$ for every $\ga\in V\backslash \A$ (Properties \ref{prop:prop}.\ref{prop:laplacian}). So, gathering the terms 

\begin{multline}
    f(e^{\s^*_f}\circ\x)=\frac{c_0}{\lambda_0}e^{\langle \s^*_f,\ba(0)\rangle}\dsum_{m=0}^1\frac{\Theta_{f}+(-1)^mc}{2\Theta_{f}}\left[c_{\be\be}\left(\x^{\frac{\be'}{2}} + (-1)^m\x^{\frac{\be''}{2}}\right)^2\right]\\
        + \frac{c_0}{\lambda_0}e^{\langle \s^*_f,\ba(0)\rangle}\dsum_{\ga\in V\setminus (\A\cup \{\be\})}c_{\be\ga}\left(\x^{\frac{\ga'}{2}}-\x^{\frac{\ga''}{2}}\right)^2,
\end{multline}
it just remains to undo the transformation to achieve the desired result.

To prove the second part, notice that the polynomial $f$ can be written as
    \begin{equation}
    f(\x)=\dsum_{j=0}^dc_j\x^{\ba(j)}-\Theta_f\x^{\be} + (\Theta_f+c)\x^{\be}=f^*_1(\x)+(\Theta_f+c)\left(\x^{\frac{\be}{2}}\right)^2.
    \end{equation}
    Now, use the same argument for $f^*_1(\x)$ in the proof of the first part and yield the result. 
\end{proof}

\end{document}